   \DeclareMathOperator{\Id}{Id}
   \DeclareMathOperator{\e}{e}
   \DeclareMathOperator{\Lip}{Lip}
   \newtheorem{theorem}{Theorem}[section]
   \newtheorem{lemma}[theorem]{Lemma}
   \newtheorem{example}[theorem]{Example}
   \newtheorem{corollary}[theorem]{Corollary}
   \newcommand{\N}{\mathds{N}}
   \newcommand{\Z}{\mathds{Z}}
   \newcommand{\R}{\mathds{R}}
   \newcommand{\T}{\mathbb{T}}
   \newcommand{\sF}{\mathscr{F}}
   \newcommand{\cG}{\mathcal{D}}%{\mathcal{G}}
   \newcommand{\cH}{\mathcal{C}}%{\mathcal{H}}
   \newcommand{\cV}{\Gamma}
   \newcommand{\cX}{\mathfrak{U}}%{\mathfrak{X}}
   \newcommand{\cL}{\mathfrak{D}}%{\mathfrak{L}}
   \newcommand{\fJ}{\mathfrak{C}}%{\mathfrak{J}}
   \newcommand{\JJ}{C}   
   \newcommand{\LL}{D}      
   \newcommand{\TT}{U}   
   \newcommand{\lb}{label}
   \newcommand{\lm}{leftmargin}
\newcommand{\eps}{\varepsilon}
\newcommand{\lbd}{\lambda}
\newcommand{\sgm}{\sigma}
\newcommand{\w}{\omega}
\newcommand{\W}{\Omega}
\newcommand{\prts}[1]{\!\left(#1\right)}
\newcommand{\prtsr}[1]{\left[#1\right]}
\newcommand{\abs}[1]{\left|#1\right|}
\newcommand{\norm}[1]{\left\|#1\right\|}
\newcommand{\set}[1]{\left\{#1\right\}}
\newcommand{\maxs}[1]{\max\set{#1}}
\newcommand{\mins}[1]{\min\set{#1}}
\newcommand{\sups}[1]{\sup\set{#1}}
\newcommand{\x}{\times}
\newcommand{\pfrac}[2]{\prts{\dfrac{#1}{#2}}}
\newcommand{\dsum}{\displaystyle\sum}
\newcommand{\dint}{\displaystyle\int}
\newcommand{\dlim}{\displaystyle\lim}
\newcommand{\dr}{\, dr}
\newcommand{\ds}{\, ds}
\newcommand{\cP}{\mathcal{P}}
\newcommand{\cc}{{{c}}}
\newcommand{\sss}{{{s}}}
\newcommand{\uu}{{{u}}}
\newcommand{\cs}{{{\overline{c}}}}
\newcommand{\cu}{{{\underline{c}}}}
\renewcommand{\phi}{\varphi}
\newcommand{\fthr}{f_{\theta^r \w}\prts{h_{r,\w}\prts{\xi}, \phi_{\theta^r\w}\prts{h_{r,\w}\prts{\xi}}}}
\renewcommand{\ge}{\geqslant}
\renewcommand{\geq}{\geqslant}
\renewcommand{\le}{\leqslant}
\renewcommand{\leq}{\leqslant}
\begin{document}
%-----------------------------------------------------------------------------%
\title[Center manifolds for RDS]{Center manifolds for Random Dynamical Systems with generalized trichotomies}
\author[Ant\'onio J. G. Bento]{Ant\'onio J. G. Bento}
   \address{
      Ant\'onio J. G. Bento\\
      Centro de Matem\'atica e Aplica\c{c}\~oes and Departamento de Matem\'atica\\
      Universidade da Beira Interior\\
      6201-001 Covilh\~a\\
      Portugal}
   \email{bento@ubi.pt}
\author{Helder Vilarinho}
   \address{
      Helder Vilarinho\\
      Centro de Matem\'atica e Aplica\c{c}\~oes and Departamento de Matem\'atica\\
      Universidade da Beira Interior\\
      6201-001 Covilh\~a\\
      Portugal}
   \email{helder@ubi.pt}
   \urladdr{www.helder.ubi.pt}
\date{\today}
\subjclass[2020]{37L55, 37D10, 37H99}
\keywords{Invariant manifolds, random dynamical systems, trichotomies}
%-----------------------------------------------------------------------------%
\begin{abstract}
   For small perturbations of linear Random Dynamical Systems evolving on a Banach space and exhibiting a generalized form of trichotomy, we prove the existence of invariant center manifolds, both in continuous and discrete-time. Furthermore, we provide several illustrative examples.
\end{abstract}
%-----------------------------------------------------------------------------%
\maketitle
%-----------------------------------------------------------------------------%
\section{Introduction}
%-----------------------------------------------------------------------------%

   The theory of center manifolds plays a crucial role in stability and bifurcation theory because it often decreases the dimension of the state space (see \cite{Carr-book-1981, Henry-book-1981, Guckenheimer_Holmes-book-1990, Hale_Kocak-book-1991, Haragus_Iooss-book-2011}). The work on center manifolds goes back to the sixties with the papers by Pliss~\cite{Pliss-IANSSM-1964} and by Kelley~\cite{Kelley-JMAA-1967, Kelley-JDE-1967}, after which different results about this subject were established by several authors. 
   For center manifolds of autonomous differential equations we recommend the surveys by Vanderbauwhede~\cite{Vanderbauwhede-DR-1989} (see also Vanderbauwhede and Gils~\cite{Vanderbauwhede_Gils-JFA-1987}) in finite dimension and by Vanderbauwhede and Iooss~\cite{Vanderbauwhede_Iooss-DR-1992} in infinite dimension. 
   For the nonautonomous case we recommend the survey by Aulbach and Wanner \cite{Aulbach_Wanner-book-1996}. We also recommend~\cite{Chow_Liu_Yi-TAMS-2000, Chow_Liu_Yi-JDE-2000} for finite dimension and~\cite{Sijbrand-TAMS-1985, Mielke-JDE-1986, Chow_Lu-PRSE-1988,Chow_Lu-JDE-1988, Chicone_Latushkin-JDE-1997} for infinite dimension.
   
   An important instrument to obtain center manifolds is the concept of trichotomy.    
   The (uniform) exponential trichotomies were introduced, independently, by Sacker and Sell \cite{Sacker_Sell-JDE-1976-22-(497-522)}, Aulbach \cite{Aulbach-NATMA-1982} and Elaydi and Hájek \cite{Elaydi_Hajek-JMAA-1988}. This notion is motivated by the idea of (uniform) exponential dichotomy that started in the thirties with Perron~\text{\cite{Perron-MZ-1929,Perron-MZ-1930}}. 
   
   After that several generalizations have emerged. Fenner and Pinto~\cite{Fenner_Pinto-JMAA-1997} presented the $(h,k)$-trichotomies that use nonexponential growth rates and Barreira and Valls~\text{\cite{Barreira_Valls-JMPA-2005,Barreira_Valls-ETDS-2006}} presented nonuniform exponential trichotomies that also takes into account the initial time. Later, Barreira and Valls~\text{\cite{Barreira_Valls-CPAA-2010,Barreira_Valls-SPJMS-2011}} introduced the $\rho$-nonuniform exponential trichotomies that are nonuniform and nonexponential, but do not include the $(h,k)$-trichotomies.

   In~\cite{Bento_Costa-EJQTDE-2017,Bento-CLM-2022}, a  general type of trichotomies was introduced, for linear differential equations and linear difference equations, respectively. This new framework  contains as special cases the notions of trichotomies mentioned above and also contains additional new cases (the case of dichotomies was done in \cite{Bento_Silva-BSM-2014,Bento_Silva-PM-2016}).

The theory of center manifolds has been expanded to include dynamical systems exhibiting randomness. In this work, we focus on Random Dynamical Systems (RDS), which can be generated, for instance, by random or stochastic differential equations.

Several works address various types of invariant manifolds - center, stable, unstable, and inertial - both locally and globally. These studies encompass a range of spaces, from finite dimensions, such as Euclidean space, to infinite dimensions, including Hilbert spaces and separable Banach spaces. Arnold's monograph \cite{Arnold-RDS-1998} provides a detailed exposition on the Multiplicative Ergodic Theorem and invariant manifold theory for finite-dimensional RDS. Smooth systems are discussed in \cite{Liu_Qian-book-1995}. For results on infinite-dimensional RDS, refer to \cite{Ruelle-AM-1982, Mohammed_Zhang_Zhao-book-2008, Lian_Lu-book-2010, Caraballo_Duan_Lu_Schmalfuss-ANS-2010, Lu_Schmalfuss-proc-2012, Barreira_Valls-SD-2018} and the references therein.

Center manifolds for finite dimensional RDS have also garnered attention. Wanner \cite{Wanner-DR-1995} discusses invariant manifolds, including center manifolds, in terms of linearization in $\mathbb{R}^n$. Boxler \cite{Boxler-PTRF-1989} proved the existence of center manifolds in this scenario for discrete random maps (random diffeomorphisms). Existence, smooth conjugacy theorems, and Takens-type theorems based on Lyapunov exponents were established by Li and Lu in \cite{Li_Lu-DCDS-B-2016} and by Guo and Shen in \cite{Guo_Shen-RMJM-2016}, in the presence of zero Lyapunov exponents.

Infinite-dimensional RDS hold significant interest not only due to their inherent mathematical richness but also for their applications in understanding stochastic and partial differential equations. Assuming an exponential trichotomy, Chen, Roberts and Duan~\cite{Chen_Roberts_Duan-JDEA-2015} proved the existence and smoothness of center manifolds for a class of stochastic evolution equations with linearly multiplicative noise. In~\cite{Chen_Roberts_Duan-DS-2019}, Chen, Roberts and Duan established the existence of center manifolds for both discrete and continuous infinite-dimensional RDS, assuming an exponential trichotomy, by employing the Lyapunov-Perron method. They provided examples illustrating the application of these results to stochastic evolution equations through their conversion into infinite-dimensional RDS. In a similar vein, Kuehn and Neam\c{t}u \cite{Kuhn_Neamtu-EJP-2023} address the issue of center manifolds for rough partial differential equations, which also translate into center manifolds within the RDS framework.

Exponential trichotomies have played an important role in invariant manifold theory for infinite-dimensional dynamical systems and non-autonomous systems, whether in deterministic or random scenarios, as discussed. In this work, we extend the results on the existence of center manifolds for infinite-dimensional RDS by assuming a generalized trichotomy. 
This type of general assumption was considered in \cite{Bento_Vilarinho-JDDE-2021} for dichotomies, and in this work, it is extended to include a central direction. This generalization allows for various types of behavior beyond exponential along the three subspaces that partition our phase space. In our context, each subspace is governed by a very general type of rate for controlling the growth of the evolution operator, described in terms of a cocycle. In specific cases, these subspaces align with the usual central, stable, and unstable subspaces, but our assumptions are sufficiently general to encompass behaviors beyond exponential-type, such as those found in (non)uniformly (pseudo-)hyperbolic cases. 

This paper is organized as follows. In Section~\ref{se:Notation and preliminaries}, we introduce the notation and provide preliminary content on general random trichotomies, as well as describe auxiliary spaces of functions. These spaces are crucial for managing the nonlinear components of the RDS and for deriving the center manifold as the graph of a suitable regular function. Section~\ref{sec:cont_time} presents the main result for continuous-time RDS (Theorem~\ref{thm:global}), while Section~\ref{sec:disc_time} focuses on proving the discrete-time counterpart of this result (Theorem~\ref{thm:global:disc}). In Section~\ref{section:examples:cont}, we explore continuous-time examples, including tempered exponential trichotomies, and introduce a general framework called $\psi$-trichotomies, which extends beyond exponential bounds. Corresponding discrete-time examples are provided in Section~\ref{section:examples:disc}.

%-----------------------------------------------------------------------------%
\section{Generalized trichotomies for RDS}\label{se:Notation and preliminaries}
%-----------------------------------------------------------------------------%

\subsection{Random Dynamical Systems}

Consider \emph{time} $\T=\Z$ or $\T=\R$, and set $\T^- = \T\, \cap\, ]-\infty,0]$ and $\T^+ = \T \cap [0,+\infty[$. 
A \emph{measure-preserving dynamical system} is a quadruplet $\Sigma\equiv(\W, \mathcal{F}, \mathbb{P}, \theta)$, where $(\W, \mathcal{F}, \mathbb{P})$ is a measure space and
\begin{enumerate}[\lb= ,\lm=10mm]
   \item $\theta \colon \T \times \W \to \W$ is measurable;
   \item $\theta^t(\cdot)= \theta(t,\cdot)\colon \W \to \W$ preserves $\mathbb{P}$ for all $t \in \T$;
   \item $\theta^0 = \Id_{\W}$;
   \item $\theta^{t+s}=\theta^t\circ\theta^s$ for
       all $t,s \in\T$.
\end{enumerate}

A (Bochner) measurable \emph{random dynamical system}, henceforth abbreviated as RDS, on a Banach space $X$ over a measure-preserving dynamical system $\Sigma$ with time $\T$ is a map
\begin{equation*}
  \Phi:\T\times\W\times X \to X
\end{equation*}
such that
\begin{enumerate}[\lb=$\roman*)$,\lm=10mm]
   \item $ \Phi(\cdot,\cdot,x)$ is (Bochner) measurable for all $x \in X$;
   \item $\Phi_\w^t(\cdot)=\Phi(t,\w,\cdot) \colon X \to X$ satisfies
      \begin{enumerate}[\lb=$\alph*)$,\lm=6mm]
         \item $\Phi_\w^0=\Id_X$ for all $\w\in\W$;
         \item $\Phi_\w^{t+s}=\Phi_{\theta^s\w}^t\circ\Phi_\w^s$, for all $\w\in\W$ and all $s, t\in\T$.
      \end{enumerate}
\end{enumerate}
When $\Phi_\w^t$ is a bounded linear operator for all $(t,\w)\in\T\x\W$ , the RDS $\Phi$ is called \emph{linear}. 

We may restrict the \textit{driving system} $\Sigma$ to a $\theta^t$-invariant subset $\W'\subset\W$ with $\mathbb{P}$-full measure, getting a  (Bochner) measurable RDS $\Phi\vert_{\T\times\W'\times X}$  over $\Sigma'\equiv(\W',\mathcal{F}',\mathbb{P}\vert_{\mathcal{F}'},\theta\vert_{\W'})$, where $\mathcal{F}'=\{B\cap\W'\colon B\in\mathcal{F}\}$.
%; see e.g. \cite[Lemma 2.4]{Bento_Vilarinho-JDDE-2021}.

\subsection{Generalized Trichotomies}

For every $i\in\set{\cc,\sss,\uu}$, consider a map $P^i \colon \W \x X \to X$,  and set  $P_\w^i(\cdot)=P(\w,\cdot) \colon X \to X$. Let $\cP=(P^\cc,P^\sss,P^\uu)$. A (Bochner) measurable linear RDS $\Phi$ over $\Sigma$ admits a (Bochner) measurable  \textit{$\cP$-invariant splitting} if
\begin{enumerate}[\lb=$\roman*)$,\lm=10mm]%[\lb=$($S$\arabic*)$,\lm=13mm]
   \item $P^i(\cdot,x)$ is (Bochner) measurable, for all $x \in X$ and every $i\in\set{\cc,\sss,\uu}$;
	\item $P_\w^i$ is a bounded linear projection, for all $\w \in \W$ and every $i\in\set{\cc,\sss,\uu}$;
	\item $P_\w^\cc+P_\w^\sss+P_\w^\uu=\Id$, for all $\w\in\W$;
   \item $P_\w^\cc P_\w^\sss = 0$, for all $\w\in\W$;
   \item $P_{\theta^t\w}^i \Phi_{\w}^t = \Phi_{\w}^t P_\w^i$, for all $(t ,\w) \in \T\x\W$ and  every $i\in\set{\cc,\sss,\uu}$;
\end{enumerate}
Notice that for all $\w\in\W$ and $i,j\in\set{\cc,\sss,\uu}$, with $i \ne j$, we have
$P_\w^iP_\w^j=0$. To shorten the writing during future computations, for $t\in\T$, $\w\in\W$ and $i\in\set{\cc,\sss,\uu}$ we will adopt the notation
	\[
      \Phi_\w^{i,t}=\Phi_\w^tP_\w^i.
	\]

We define the linear subspaces $E_\w^i =P_\w^i (X)$ for each $i\in\set{\cc,\sss,\uu}$. As usual, we identify $E_\w^\cc \times E_\w^\sss \times E_\w^\uu$ and $E_\w^\cc \oplus E_\w^\sss \oplus E_\w^\uu$.
Given the maps
\begin{align*}
    & \alpha^\cc \colon \T \times \W \to (0,+\infty)\\
    & \alpha^\sss \colon \T^+ \times \W \to (0,+\infty)\\
    & \alpha^\uu \colon \T^- \times \W \to (0,+\infty)
\end{align*}
we define $\alpha = (\alpha^\cc, \alpha^\sss, \alpha^\uu)$. Letting $\alpha^i(t,\w)$ be denoted by $\alpha^i_{t,\w}$, we say that a (Bochner) measurable linear RDS $\Phi$ over $\Sigma$ exhibits a \textit{generalized trichotomy with bounds $\alpha$} (or simply an \textit{$\alpha$-trichotomy}) if it admits a (Bochner) measurable $\cP$-invariant splitting satisfying
\begin{enumerate}[\lb=$($T$\arabic*)$,\lm=13mm]
   \item \label{eq:T1} $\|\Phi_\w^{\cc,t}\|\le\alpha^\cc_{t,\w}$  for all $(t,\w) \in \T \times \W$;
   \item \label{eq:T2} $\|\Phi_\w^{\sss,t}\|\le\alpha^\sss_{t,\w}$ for all $(t,\w) \in \T^+ \times \W$;
   \item \label{eq:T3} $\|\Phi_\w^{\uu,t}\|\le\alpha^\uu_{t,\w}$ for all $(t,\w) \in \T^- \times \W$.
\end{enumerate}

In Section~\ref{section:examples:cont} and Section~\ref{section:examples:disc}, we present several examples of generalized trichotomies with both exponential and non-exponential bounds $\alpha$.

\medskip
In the remainder of this article, $\Phi$ will always denote a measurable (when $\mathbb{T} = \mathbb{Z}$) or Bochner measurable (when $\mathbb{T} = \mathbb{R}$) linear RDS on a Banach space $X$ over a measure-preserving dynamical system $\Sigma \equiv (\Omega, \mathcal{F}, \mathbb{P}, \theta)$ exhibiting a trichotomy with bounds $\alpha = (\alpha^\cc, \alpha^\sss, \alpha^\uu)$.

\subsection{Auxiliary spaces}
Let $\sF$ denote the space of maps $f \colon \W \times X \to X$ for which $f(\cdot, x)$ is measurable for every $x \in X$, and for which, setting $f_\w(\cdot) = f(\w, \cdot)$, for every $\w \in \W$ we have
\begin{equation}
   \label{eq:f_w(0)=0}
      f_\w(0) = 0
\end{equation}
and
\begin{equation}
    \label{eq:Lip(f_w)}
      \Lip(f_\w) = \sup \left\{ \frac{\|f_\w(x) - f_\w(y)\|}{\|x - y\|} \colon x, y \in X, \ x \ne y \right\} < +\infty.
\end{equation}
Conditions~\eqref{eq:Lip(f_w)} and~\eqref{eq:f_w(0)=0} ensure that for all $\w \in \W$ and for all $x, y \in X$
\begin{equation}
   \label{ine:||f_w(x)-f_w(y)||<=Lip(f_w)||x-y||}
   \|f_\w(x) - f_\w(y)\| \le \Lip(f_\w) \|x - y\|,
\end{equation}
and
\begin{equation}    
    \label{ine:||f_w(x)||<=Lip(f_w)||x||}
    \|f_\w(x)\| \le \Lip(f_\w) \|x\|.
\end{equation}
Let $\sF^{(B)}$ represent the collection of functions $f \in \sF$ for which $f(\cdot, x)$ is Bochner measurable for each $x \in X$. Additionally, define $\sF^{(B)}_{\alpha}$ as the subset of $\sF^{(B)}$ consisting of functions $f$ such that, for every $\w \in \W$, the functions   \begin{align*}
 &
   [a,b] \ni r \mapsto \alpha^\cc_{t-r,\theta^r \w} \Lip(f_{\theta^r \w}) \alpha^\cc_{r,\w}
\\
 & [c,0] \ni  r \mapsto \alpha^\sss_{-r,\theta^r \w} \Lip(f_{\theta^r \w}) \alpha^\cc_{r,\w}
\\
&   [0,d] \ni r \mapsto \alpha^\uu_{-r,\theta^r \w} \Lip(f_{\theta^r \w}) \alpha^\cc_{r,\w}
\end{align*}
are measurable for every $a< b$, $c<0$, $d>0$ and $t\in\R$.
 
\medskip
We define the set
\[
\cH = \{(t, \w, \xi) \in \T \times \W \times X \colon \xi \in E_{\w}^\cc\}.
\]
For a given $M > 0$, let $\fJ_M$ (resp. $\fJ^{(B)}_M$) denote the space of all functions $h \colon \cH \to X$ such that, for each $(t, \w) \in \T \times \W$, the map $h_{t, \w}(\cdot) = h(t, \w, \cdot)$ satisfies
\begin{align}
   & \label{eq:(t,w)|->h_t,w(xi):Borel}
      h(\cdot,\cdot,P_\w^\cc x )
      \text{ is measurable (resp. Bochner measurable) for all } x \in X;\\
   & \label{eq:h_n,w(0)=0}
      h_{t,\w}(0) =0
      \text{ for all } (t,\w)\in\T\x\W;\\
   & \label{eq:h_0,w(xi)=xi}
      h_{0,\w}=\Id_{E_\w^\cc}
      \text{ for all } \w \in \W;\\
   & \label{h_n,w(xi):in:E(t^n(w))}
      h_{t,\w}(E_{\w}^\cc) \subseteq E_{\theta^t\w}^\cc
      \text{ for all } (t,\w) \in \T\x\W;\\
   & \label{ine:||h_n,w(xi)-h_n,w(bxi)||<=M...}
      \|h_{t,\w}(\xi)-h_{t,\w}(\xi')\| \le M \alpha^\cc_{t,\w}\|\xi-\xi'\| 
      \text{ for all } (t,\w,\xi), (t,\w,\xi') \in \cH.
\end{align}
From~\eqref{ine:||h_n,w(xi)-h_n,w(bxi)||<=M...} and~\eqref{eq:h_n,w(0)=0} it follows that
\begin{equation}\label{ine:||h_n,w(xi)||<=M...}
   \|h_{t,\w}(\xi)\| \le M \alpha^\cc_{t,\w}\|\xi\|\,
      \text{ for all } (t,\w,\xi) \in \cH.
\end{equation}
Defining
\begin{equation} \label{def:d_1(x,y)}
   d_1(h,g)
   = \sup\set{\dfrac{\|h_{t,\w}(\xi)-g_{t,\w}(\xi)\|}{\alpha^\cc_{t,\w} \|\xi\|} \colon
      (t,\w,\xi) \in \cH, \ \xi \neq 0}
\end{equation}
we have that  $({\fJ_M},d_1)$ and $(\fJ_M^{(B)},d_1)$ are complete metric spaces. 

\medskip
We now consider the set
\[
\cG = \{(\w, \xi) \in \W \times X \colon \xi \in E_{\w}^\cc\}.
\]
For a given $N > 0$, let $\cL_N$ (resp. $\cL_N^{(B)}$) denote the space of all functions $\phi \colon \cG \to X$ such that, for each $\w \in \W$, the map $\phi_\w(\cdot) = \phi(\w, \cdot)$ satisfies
\begin{align}
   & \label{eq:w|->phi_w(xi):Borel}
       \phi(\cdot,P_\w^\cc x)
      \text{ is measurable (resp. Bochner measurable)  for all } x \in X;\\
   &
      \phi_\w(0)=0 \text{ for all } \w \in \W; \label{eq:phi_w(0)=0} \\
   & \label{eq:phi_w(xi):in:F_w}
      \phi_\w(E_\w^\cc) \subseteq E_\w^\sss\oplus E_\w^\uu \text{ for all } \w\in\W; \\
   & \label{eq:||phi_w(xi)-pi_w(bxi)||<=...}
      \|\phi_\w(\xi) - \phi_\w(\xi')\| \le N \|\xi - \xi'\|
      \text{ for all } (\w,\xi), (\w, \xi') \in \cG.
\end{align}
By~\eqref{eq:||phi_w(xi)-pi_w(bxi)||<=...} and~\eqref{eq:phi_w(0)=0}, taking $\xi' = 0$ we get
\begin{equation} \label{ine:||phi_w(xi)||<=N||xi||}
   \|\phi_\w(\xi)\| \le N \|\xi\|
   \text{ for all  } (\w, \xi) \in \cG.
\end{equation}
For future use, we set the notation $\phi^\sss_\w = P^\sss_\w \phi_\w$ and $\phi^\uu_\w = P^\uu_\w \phi_\w$. 
Given $\phi \in \cL_N$ and $\w \in \W$ we denote the \emph{graph} of $\phi_\w$ by
\begin{equation*}
   \cV_{\phi,\w}
   = \set{(\xi,\phi_\w(\xi)): \xi \in E_\w^\cc}\subseteq X.
\end{equation*} 
Defining now
\begin{equation}\label{def:d_2(phi,psi)}
   d_2(\phi,\psi)
   = \sups{\dfrac{\|\phi_\w(\xi) - \psi_\w(\xi)\|}{\|\xi\|}
      \colon (\w,\xi)\in\cG,  \xi\neq 0}
\end{equation}
we have that $(\cL_N,d_2)$ and
$(\cL_N^{(B)},d_2)$ are complete metric spaces. 

\medskip
To finalize this section, let $\cX_{M,N}=\fJ_M \times
\cL_N$ and  $\cX^{(B)}_{M,N}=\fJ^{(B)}_M \x \cL^{(B)}_N$. Setting
$$ d\prts{(h,\phi),(g,\psi)} = d_1(h,g) + d_2(\phi,\psi),$$
we also have that $(\cX_{M,N},d)$ and $(\cX^{(B)}_{M,N},d)$ 
are complete metric spaces.

\section{Invariant manifolds in continuous-time RDS} \label{sec:cont_time}
Throughout this section, we focus on the continuous-time case by considering \(\mathbb{T} = \mathbb{R}\).
Given a Bochner measurable linear RDS $\Phi$ and a map $f \in \sF^{(B)}_\alpha$, we define
\begin{equation}\label{def:sigma}
   \sgm
   = \sup\limits_{(t,\w) \in \R \x \W} \
      \dfrac{1}{\alpha^\cc_{t,\w}}
      \abs{
      \dint_{0}^{t} \alpha^\cc_{t-r,\theta^r\w} \Lip(f_{\theta^r\w}) \alpha^\cc_{r,\w}\dr}
\end{equation}
and
\begin{equation}\label{def:tau}
   \tau
   = \sup_{\w\in\W} \dint_{-\infty}^{0} \
      \alpha^\sss_{-r,\theta^r\w} \Lip(f_{\theta^r\w}) \alpha^\cc_{r,\w}\dr
   +
      \dint_{0}^{+\infty} \
      \alpha^\uu_{-r,\theta^r\w} \Lip(f_{\theta^r\w}) \alpha^\cc_{r,\w}\dr.
\end{equation}
If for every $(\w,x)\in\W\x X $ there is a unique solution $\Psi(\cdot,\w,x)$ of the equation
\begin{equation}\label{eq:u(t)=...}
   u(t) = \Phi_\w^t x
      + \dint_0^t \Phi_{\theta^{r}\w}^{t-r} f_{\theta^r \w}(u(r))\,dr
\end{equation}
then 
$\Psi\colon \R \x \W \x X \to X$ is a Bochner measurable RDS on $X$ over $\Sigma$. In particular, 
$
\Psi(\cdot,\cdot,x)$ is Bochner measurable for all $x\in X$, and
\begin{equation}\label{eq:Psi:cont}
   \Psi_\w^t x
   = \Phi_\w^t x
      + \dint_0^t \Phi_{\theta^{r}\w}^{t-r} f_{\theta^r \w}(\Psi_\w^r x)\,dr.
\end{equation}

\begin{theorem} \label{thm:global}
   Let $\Phi$ be a Bochner measurable linear RDS exhibiting an $\alpha$-trichotomy, and let $f \in \sF^{(B)}_\alpha$. Suppose that $\Psi$ is a Bochner measurable RDS such that$\Psi(\cdot,\w,x)$ is the unique solution of~\eqref{eq:u(t)=...} for all $(\w,x) \in \W\x X$. If
   \begin{equation} \label{eq:CondicaoTeo}
      \lim_{t \to -\infty} \alpha^\sss_{-t,\theta^t\w} \alpha^\cc_{t,\w}
      = \lim_{t \to +\infty}  \alpha^\uu_{-t,\theta^t\w} \alpha^\cc_{t,\w}
      = 0
   \end{equation}
   for all $\w\in\W$, and
   \begin{equation}\label{eq:CondicaoTeo:alpha+tau<1/2}
      \sgm +  \tau < 1/2,
   \end{equation}
   then there are $N \in\ ]0,1[$ and a unique $\phi \in \cL^{(B)}_N$ such that
   \begin{equation} \label{thm:global:invar}
      \Psi_{\w}^t(\cV_{\phi,\w})
      \subseteq \cV_{\phi,\theta^t\w}
   \end{equation}
   for all $(t,\w) \in \R \x \W$.   Moreover,   for  all $(t,\w, \xi), (t,\w, \xi') \in \cH$ we have
   \begin{equation}\label{thm:ineq:norm:F_mn(xi...)-F_mn(barxi...)}
      \|\Psi_{\w}^t(\xi,\phi_\w(\xi)) - \Psi_{\w}^t(\xi',\phi_\w(\xi'))\|
      \le ({N}/{\tau}) \alpha^\cc_{t,\w} \, \|\xi - \xi'\|.
   \end{equation}
\end{theorem}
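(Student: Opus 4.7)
My approach is the Lyapunov--Perron method: I realize the graph function $\phi$ together with the associated internal dynamics $h$ along the manifold as the unique fixed point of an operator on $\cX^{(B)}_{M,N}$. The motivation is that if $u(t) = \Psi_\w^t(\xi,\phi_\w(\xi))$ is to remain on the manifold with $P_\w^\cc u(0) = \xi$, then setting $h_{t,\w}(\xi) = P_{\theta^t\w}^\cc u(t)$ and projecting~\eqref{eq:Psi:cont} onto $E^\cc$, $E^\sss$, $E^\uu$ yields three integral identities. The center projection gives
\begin{equation*}
h_{t,\w}(\xi) = \Phi_\w^{\cc,t}\xi + \int_0^t \Phi_{\theta^r\w}^{\cc,t-r}\,\fthr\,dr.
\end{equation*}
For the stable and unstable projections, rewriting the Duhamel formula between times $t_0$ and $0$ and letting $t_0 \to -\infty$ and $t_0 \to +\infty$ respectively kills the boundary terms (which are controlled by $\alpha^\sss_{-t_0,\theta^{t_0}\w}\alpha^\cc_{t_0,\w}$ or $\alpha^\uu_{-t_0,\theta^{t_0}\w}\alpha^\cc_{t_0,\w}$, vanishing by~\eqref{eq:CondicaoTeo}), leaving
\begin{equation*}
\phi_\w(\xi) = \int_{-\infty}^0 \Phi_{\theta^r\w}^{\sss,-r}\fthr\,dr - \int_0^{+\infty} \Phi_{\theta^r\w}^{\uu,-r}\fthr\,dr.
\end{equation*}
Together, these two formulas define an operator $\cJ \colon \cX^{(B)}_{M,N} \to \cX^{(B)}_{M,N}$ whose fixed point will produce the manifold.

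I would then choose $M>0$ and $N\in(0,1)$ parameterized by $M(1+N)\tau = N$, so that the sharp Lipschitz constant $N/\tau$ in~\eqref{thm:ineq:norm:F_mn(xi...)-F_mn(barxi...)} emerges at the end. The remaining constraints are the self-map inequality $M \ge 1/(1-(1+N)\sgm)$ (coming from the center integral bound via~\eqref{def:sigma}) and the contractivity bound $(\sgm+\tau)\max(M,1+N) < 1$. A direct calculation (reducing to a quadratic in $s=N/(1+N)$ and testing at $s=1/2$) shows that $\sgm+\tau<1/2$ leaves strictly positive room to choose such constants with $N<1$. That $\cJ$ maps $\cX^{(B)}_{M,N}$ into itself is then routine: Bochner measurability follows from the defining property of $\sF^{(B)}_\alpha$ and the measurability hypotheses on $(h,\phi)$; the inclusions into the correct subspaces follow from the commutation $P_{\theta^t\w}^i\Phi_\w^t = \Phi_\w^t P_\w^i$; the normalizations at $\xi=0$ and $t=0$ are immediate; and the Lipschitz bounds come from~\eqref{ine:||f_w(x)-f_w(y)||<=Lip(f_w)||x-y||} together with the defining $\sgm$ and $\tau$. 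For the contraction, given two inputs $(h,\phi)$ and $(\bar h,\bar\phi)$, one bounds the difference of the $f$-integrands at step $r$ by $\Lip(f_{\theta^r\w})\bigl[(1+N)d_1(h,\bar h)+M d_2(\phi,\bar\phi)\bigr]\alpha^\cc_{r,\w}\|\xi\|$ and integrates against the appropriate $\alpha$-kernel, obtaining a Lipschitz constant $(\sgm+\tau)\max(1+N,M)<1$. Banach's theorem on $(\cX^{(B)}_{M,N},d)$ yields a unique fixed point $(h^*,\phi^*)$.

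Finally, I would verify the invariance~\eqref{thm:global:invar}. Setting $u(t) := h^*_{t,\w}(\xi) + \phi^*_{\theta^t\w}(h^*_{t,\w}(\xi))$, one checks that $u$ solves~\eqref{eq:u(t)=...} with initial data $\xi+\phi^*_\w(\xi)$ by decomposing into the three projections: the center component matches the fixed point equation for $h^*$, while the stable and unstable components are obtained by invoking the fixed point identity for $\phi^*$ at $\theta^t\w$, changing variables $r \mapsto r-t$ inside the improper integrals, and invoking the cocycle property of $\Phi$. Uniqueness of~\eqref{eq:u(t)=...} then gives $\Psi_\w^t(\xi,\phi^*_\w(\xi)) = h^*_{t,\w}(\xi) + \phi^*_{\theta^t\w}(h^*_{t,\w}(\xi)) \in \cV_{\phi^*,\theta^t\w}$, establishing invariance. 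Estimate~\eqref{thm:ineq:norm:F_mn(xi...)-F_mn(barxi...)} then follows from the direct sum bound $\|u(t)-u'(t)\| \le (1+N)\|h^*_{t,\w}(\xi)-h^*_{t,\w}(\xi')\| \le (1+N)M\alpha^\cc_{t,\w}\|\xi-\xi'\|$ combined with the parametrization $M(1+N)=N/\tau$. The main technical hurdle I anticipate is the invariance verification on the stable and unstable projections, which requires a careful shift-and-split manipulation of the two improper integrals using the cocycle identity.
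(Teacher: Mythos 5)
Your proposal is correct and follows essentially the same Lyapunov--Perron argument as the paper: the same operator $(h,\phi)\mapsto(\JJ(h,\phi),\LL(h,\phi))$ on $\cX^{(B)}_{M,N}$, the same choice of constants (your conditions $M(1+N)\tau=N$ and $M\ge 1/(1-(1+N)\sgm)$ are exactly the relations~\eqref{def:M+N}, which the paper obtains by citing an external lemma rather than your direct quadratic computation), the same contraction constant $(\sgm+\tau)\max\{1+N,M\}<1$, and the same shift-and-split verification of invariance via the flow property $h_{t+s,\w}=h_{s,\theta^t\w}\circ h_{t,\w}$ and uniqueness of solutions of~\eqref{eq:u(t)=...}. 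No substantive differences.
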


The remaining part of this section is devoted to prove Theorem~\ref{thm:global}. 

\medskip
From \cite[Lemma 5.1]{Bento_Costa-EJQTDE-2017},  we may found constants $M \in\ ]1,2[$ and $N \in\ ]0,1[$ such that
   \begin{equation}\label{def:M+N}
      \sgm = \dfrac{M-1}{M(1+N)}
      \ \ \ \text{ and } \ \ \
      \tau = \dfrac{N}{M(1+N)}.
   \end{equation}

\begin{lemma}
   Consider $(h,\phi) \in \cX^{(B)}_{M,N}$. 
   \begin{enumerate}[\lb=$\alph*)$,\lm=6mm]
      \item For every $x \in X$ the maps
         \begin{equation*}%\label{eq:(t,r,w)|->Phi_P_f...:Bochner:meas}
            \begin{split}
               &  (t,r,\w) \mapsto \Phi^{\cc,t-r}_{\theta^r \w}
                  f_{\theta^r \w} \prts{h_{r,\w}(P^\cc_\w x),
                     \phi_{\theta^r \w}\prts{h_{r,\w}\prts{P^\cc_\w x}}}\\
               & (r,\w) \mapsto \Phi^{\sss,-r}_{\theta^r \w}
                  f_{\theta^r \w} \prts{h_{r,\w}(P^\cc_\w x),
                     \phi_{\theta^r \w}\prts{h_{r,\w}\prts{P^\cc_\w x}}}\\
               & (r,\w) \mapsto \Phi^{\uu,-r}_{\theta^r \w}
                  f_{\theta^r \w} \prts{h_{r,\w}(P^\cc_\w x),
                     \phi_{\theta^r \w}\prts{h_{r,\w}\prts{P^\cc_\w x}}}
            \end{split}
         \end{equation*}
         are Bochner measurable on $\R\times\R\times\W$, $\R^-\times\W$ and $\R^+\times\W$, respectively.
      \item For every $(t,\w,x) \in \R \x \W \x X$ the map
         \begin{equation*}
            r \mapsto \Phi^{\cc,t-r}_{\theta^r \w}
               f_{\theta^r \w} \prts{h_{r,\w}(P^\cc_\w x),
                  \phi_{\theta^r \w}\prts{h_{r,\w}\prts{P^\cc_\w x}}}
         \end{equation*}
         is Bochner integrable in every closed interval with bounds $0$ and $t$.
      \item For every $(\w,x) \in \W \x X$ and $t>0$, the maps
      \begin{equation*}
\begin{split}            
            &r \mapsto \Phi^{\sss,-r}_{\theta^r \w}
               f_{\theta^r \w} \prts{h_{r,\w}(P^\cc_\w x),
                  \phi_{\theta^r \w}\prts{h_{r,\w}\prts{P^\cc_\w x}}}
\\
&
            r \mapsto \Phi^{\uu,-r}_{\theta^r \w}
               f_{\theta^r \w} \prts{h_{r,\w}(P^\cc_\w x),
                  \phi_{\theta^r \w}\prts{h_{r,\w}\prts{P^\cc_\w x}}}
\end{split} 
         \end{equation*}
         are Bochner integrable in $[-t,0]$ and $[0,t]$, respectively.
   \end{enumerate}
   \end{lemma}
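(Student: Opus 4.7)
The three assertions split cleanly into a measurability part (a) and two integrability parts (b), (c); for the latter, the strategy is simply to combine the trichotomy bounds \ref{eq:T1}--\ref{eq:T3} with the Lipschitz bounds available from the definitions of $\fJ^{(B)}_M$ and $\cL_N^{(B)}$ and then invoke the finiteness of $\sgm$ and $\tau$, which is guaranteed by~\eqref{eq:CondicaoTeo:alpha+tau<1/2}.

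For part~(a), I would assemble Bochner measurability from the building blocks. By hypothesis $\Phi$ is Bochner measurable, so each $\Phi^{i,s}_\w$ is; since $h \in \fJ^{(B)}_M$, property~\eqref{eq:(t,w)|->h_t,w(xi):Borel} gives that $(r,\w)\mapsto h_{r,\w}(P^\cc_\w x)$ is Bochner measurable into $X$, and similarly property~\eqref{eq:w|->phi_w(xi):Borel} together with~\eqref{eq:||phi_w(xi)-pi_w(bxi)||<=...} (which makes $\phi_\w$ continuous and hence the composition $\w\mapsto\phi_\w(y(\w))$ Bochner measurable whenever $y$ is) yields Bochner measurability of $(r,\w)\mapsto \phi_{\theta^r\w}(h_{r,\w}(P^\cc_\w x))$. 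Then using $f\in\sF^{(B)}_\alpha$ (whose first component is Bochner measurable and whose $\Lip$-bound ensures joint continuity in the spatial variables) produces Bochner measurability of the inside term, and one more composition with the linear operators $\Phi^{i,t-r}_{\theta^r\w}$ closes part~(a). The only subtle point is that we need joint measurability in $(t,r,\w)$ for the first map; this follows because $(t,r)\mapsto \Phi^{\cc,t-r}_{\theta^r\w}$ is continuous in $(t,r)$ (via the cocycle property and strong continuity encoded in Bochner measurability of the RDS) combined with measurability in $\w$.

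For part~(b), I would produce a pointwise majorant for the integrand. Using \ref{eq:T1}, \eqref{ine:||f_w(x)||<=Lip(f_w)||x||}, \eqref{ine:||h_n,w(xi)||<=M...}, and \eqref{ine:||phi_w(xi)||<=N||xi||}, one obtains the chain of estimates
\begin{equation*}
  \bigl\|\Phi^{\cc,t-r}_{\theta^r\w}\,f_{\theta^r\w}\bigl(h_{r,\w}(P^\cc_\w x),\phi_{\theta^r\w}(h_{r,\w}(P^\cc_\w x))\bigr)\bigr\|
  \le M(1+N)\,\|P^\cc_\w x\|\,\alpha^\cc_{t-r,\theta^r\w}\Lip(f_{\theta^r\w})\alpha^\cc_{r,\w}.
\end{equation*}
By the measurability condition built into the definition of $\sF^{(B)}_\alpha$, the right-hand side is a measurable function of $r$ on any interval $[a,b]$, and its integral over any closed interval with endpoints $0$ and $t$ is bounded by $M(1+N)\|P^\cc_\w x\|\,\sgm\,\alpha^\cc_{t,\w}$, which is finite by~\eqref{def:sigma} and~\eqref{eq:CondicaoTeo:alpha+tau<1/2}. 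Combined with the Bochner measurability already established in part~(a), this yields Bochner integrability on the required interval.

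For part~(c), the same recipe works with \ref{eq:T2} and \ref{eq:T3} replacing \ref{eq:T1}: the integrands are dominated by
\begin{equation*}
  M(1+N)\,\|P^\cc_\w x\|\,\alpha^\sss_{-r,\theta^r\w}\Lip(f_{\theta^r\w})\alpha^\cc_{r,\w}
\quad\text{and}\quad
  M(1+N)\,\|P^\cc_\w x\|\,\alpha^\uu_{-r,\theta^r\w}\Lip(f_{\theta^r\w})\alpha^\cc_{r,\w}
\end{equation*}
on $[-t,0]$ and $[0,t]$ respectively, each measurable by the definition of $\sF^{(B)}_\alpha$ and with integral bounded by $M(1+N)\|P^\cc_\w x\|\,\tau<+\infty$. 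Since the integrability is only claimed over the bounded intervals $[-t,0]$ and $[0,t]$, even a local form of the domination suffices. The main obstacle I anticipate is the careful bookkeeping in part~(a), where one must verify that the various compositions genuinely preserve Bochner measurability jointly in $(t,r,\w)$; once this is settled, parts~(b) and~(c) are routine applications of Bochner's theorem using the dominations above.
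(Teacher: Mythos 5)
Your proposal is correct and follows essentially the intended argument: the paper itself gives no proof here, simply remarking that it runs as in Lemma~3.6 of \cite{Bento_Vilarinho-JDDE-2021}, which is exactly your scheme of building Bochner measurability by composing the measurable data~\eqref{eq:(t,w)|->h_t,w(xi):Borel}, \eqref{eq:w|->phi_w(xi):Borel} with the Lipschitz maps $\phi_\w$, $f_\w$ and the operators $\Phi^{i,\cdot}_{\cdot}$, and then dominating the integrands by $M(1+N)\|P^\cc_\w x\|$ times the scalar functions whose measurability is built into $\sF^{(B)}_\alpha$ and whose integrals are controlled by $\sgm$ and $\tau$. One small caveat: your justification of joint measurability in $(t,r,\w)$ via ``strong continuity encoded in Bochner measurability'' is not right as stated (Bochner measurability of $\Phi(\cdot,\cdot,y)$ gives no continuity in $t$); the correct route is to approximate the inner Bochner measurable map by simple functions, note that $(t,r,\w)\mapsto\Phi^{t-r}_{\theta^r\w}y$ is Bochner measurable for each fixed $y$ since $(t,r,\w)\mapsto(t-r,\theta^r\w)$ is measurable, and pass to the pointwise limit using boundedness of each operator $\Phi^{t-r}_{\theta^r\w}$.
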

The proof follows similarly as \cite[Lemma 3.6]{Bento_Vilarinho-JDDE-2021}.
Given $\w\in \W$ and $x_\w=(x^\cc_\w, x^\sss_\w, x^\uu_\w)\in E^\cc_\w \times E^\sss_\w \times E^\uu_\w$,
it follows from~\eqref{eq:Psi:cont} that the trajectory
$x_{\theta^t\w} = \Psi^t_\w x_\w=\prts{x^\cc_{\theta^t\w}, x^\sss_{\theta^t\w}, x^\uu_{\theta^t\w}}$ satisfies, for all $i\in\{\cc,\sss,\uu\}$
and all $t \in \R$,
\begin{equation}\label{eq:dyn-split1x}
      x^i_{\theta^t\w}
      = \Phi^{i,t}_{\w} x_\w
         + \int_0^t \Phi^{i,t-s}_{\theta^s\w} f_{\theta^s\w}
         \prts{x^\cc_{\theta^s\w}, x^\sss_{\theta^s\w}, x^\uu_{\theta^s\w}}\ds.
\end{equation}

Taking into account the invariance required in~\eqref{thm:global:invar}, for any given $x_\w\in\cV_{\phi,\w}$ and $t \in \R$ we must have $x_{\theta^t\w}\in\cV_{\phi,\theta^t\w}$. Thus, in this situation, the equations given by~\eqref{eq:dyn-split1x} can be written as
\begin{align*}
   &
      x^\cc_{\theta^t\w}
      = \Phi_{\w}^{\cc,t} x_\w + \int_0^t \Phi_{\theta^s\w}^{\cc,t-s}
         f_{\theta^s\w} \prts{x^\cc_{\theta^s\w},\phi_{\theta^s\w}\prts{x^\cc_{\theta^s\w}}}\ds,
         %\label{eq:dyn-split2a}
   \\
   &
      \phi^\sss_{\theta^t\w}\prts{x_{\theta^t\w}}
      = \Phi_{\w}^{t}  \phi^\sss_{\w}\prts{x^\cc_\w} + \int_0^t
         \Phi_{\theta^s\w}^{\sss,t-s} f_{\theta^s\w} \prts{x^\cc_{\theta^s\w},\phi_{\theta^s\w}\prts{x^\cc_{\theta^s\w}}}\ds,
         %\label{eq:dyn-split2b}
   \\
   &
      \phi^\uu_{\theta^t\w}\prts{x_{\theta^t\w}}
      = \Phi_{\w}^{t}  \phi^\uu_{\w}\prts{x^\cc_\w} + \int_0^t
         \Phi_{\theta^s\w}^{\uu,t-s} f_{\theta^s\w} \prts{x^\cc_{\theta^s\w},\phi_{\theta^s\w}\prts{x^\cc_{\theta^s\w}}}\ds.
         %\label{eq:dyn-split2c}
\end{align*}

\begin{lemma} \label{lemma:equiv}
   Consider $(h,\phi) \in \cX^{(B)}_{M,N}$ such that, for all $(t,\w,\xi)\in \cH$,
   \begin{equation} \label{eq:dyn-split3a}
      h_{t,\w}(x)
      = \Phi_{\w}^t \xi + \int_0^t \Phi_{\theta^r\w}^{\cc,t-r}
         \fthr \dr.
   \end{equation}
   The following properties $a)$ and $b)$ are equivalent:
   \begin{enumerate}[\lb=$\alph*)$,\lm=5mm]
      \item For each $j\in\{\sss,\uu\}$ and all $(t,\w,\xi)\in \cH$,
         \begin{equation}\label{eq:dyn-split3x}
             \phi^j_{\theta^t\w}\prts{h_{t,\w}\prts{\xi}}
             = \Phi_{\w}^t  \phi^j_{\w}(\xi) 
               + \int_0^t \Phi_{\theta^r\w}^{j,t-r} \fthr \dr
         \end{equation}
      \item For all $(\w,\xi) \in \cG$
        \begin{equation} \label{eq:phi^+_w(xi)=...}
            \phi^\sss_{\w}(\xi)
               = \int_{-\infty}^0 \Phi^{\sss,-r}_{\theta^r \w} \fthr \dr
        \end{equation}
        and
        \begin{equation} \label{eq:phi^-_w(xi)=...}
            \phi^\uu_{\w}(\xi)
               = - \int_0^{+\infty} \Phi^{\uu,-r}_{\theta^r \w} \fthr \dr.
        \end{equation}
   \end{enumerate}
\end{lemma}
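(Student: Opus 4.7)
I would prove the two implications separately. Both directions rely on two cocycle-type ingredients: the $\Phi$-identities $\Phi^t_\w\Phi^{\sss,-r}_{\theta^r\w} = \Phi^{\sss,t-r}_{\theta^r\w}$ and $\Phi^{-t}_{\theta^t\w}\Phi^{\uu,t-r}_{\theta^r\w} = \Phi^{\uu,-r}_{\theta^r\w}$ (consequences of the RDS cocycle property together with the invariance of the projections), and the cocycle property $h_{s,\w}(\xi) = h_{s-t,\theta^t\w}(h_{t,\w}(\xi))$ of $h$. I would deduce the latter from \eqref{eq:dyn-split3a} by observing that both sides, viewed as functions of $s$, satisfy the same Volterra-type integral equation based at $\theta^t\w$ with initial value $h_{t,\w}(\xi)$; uniqueness follows from the Lipschitz bounds on $f$ and $\phi$.

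For $a) \Rightarrow b)$ with $j = \sss$, evaluate \eqref{eq:dyn-split3x} at time $-t$ (with $t > 0$) and apply $\Phi^t_{\theta^{-t}\w}$ from the left. The cocycle relation gives $\Phi^t_{\theta^{-t}\w}\Phi^{-t}_\w = \Id$ and $\Phi^t_{\theta^{-t}\w}\Phi^{\sss,-t-r}_{\theta^r\w} = \Phi^{\sss,-r}_{\theta^r\w}$, so after rewriting $\int_0^{-t} = -\int_{-t}^0$ one arrives at
\begin{equation*}
    \phi^\sss_\w(\xi)
    = \Phi^{\sss,t}_{\theta^{-t}\w}\phi_{\theta^{-t}\w}\prts{h_{-t,\w}(\xi)}
        + \int_{-t}^0 \Phi^{\sss,-r}_{\theta^r\w}\fthr\dr.
\end{equation*}
Letting $t \to +\infty$, the boundary term is bounded by $NM\,\alpha^\sss_{t,\theta^{-t}\w}\alpha^\cc_{-t,\w}\|\xi\|$ and vanishes by \eqref{eq:CondicaoTeo}, while the integral converges by finiteness of $\tau$; this yields \eqref{eq:phi^+_w(xi)=...}. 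The case $j = \uu$ is symmetric: apply \eqref{eq:dyn-split3x} at time $t > 0$, multiply by $\Phi^{-t}_{\theta^t\w}$, use $\Phi^{-t}_{\theta^t\w}\Phi^t_\w = \Id$ and the unstable identity above, and let $t \to +\infty$ to recover \eqref{eq:phi^-_w(xi)=...}.

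For $b) \Rightarrow a)$ with $j = \sss$, substitute \eqref{eq:phi^+_w(xi)=...} into $\Phi^t_\w\phi^\sss_\w(\xi)$ and use the first $\Phi$-identity above to merge it with the finite integral in \eqref{eq:dyn-split3x}, rewriting the right-hand side as $\int_{-\infty}^t \Phi^{\sss,t-r}_{\theta^r\w}\fthr\dr$. Then apply \eqref{eq:phi^+_w(xi)=...} at the shifted base point $(\theta^t\w,h_{t,\w}(\xi))$ and change variables $s = r + t$; the cocycle property of $h$ matches the two integrands, closing the equivalence. The $j = \uu$ case is analogous via \eqref{eq:phi^-_w(xi)=...}. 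The main obstacle is precisely this cocycle identity for $h$: the substitution $s = r + t$ in $b) \Rightarrow a)$ produces integrands built from $h_{s-t,\theta^t\w}(h_{t,\w}(\xi))$, and equating them with $h_{s,\w}(\xi)$ is the one substantive input; the remaining steps are careful bookkeeping of cocycle manipulations on $\Phi$, projection invariance, and integration bounds.
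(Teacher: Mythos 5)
Your proof is correct and follows essentially the same route as the paper: for $a)\Rightarrow b)$ you invert the flow in \eqref{eq:dyn-split3x} and kill the boundary term via \eqref{eq:CondicaoTeo} together with the bounds \eqref{ine:||phi_w(xi)||<=N||xi||} and \eqref{ine:||h_n,w(xi)||<=M...} (your parametrization by $-t$ with $t\to+\infty$ is just a relabeling of the paper's $t\to-\infty$), and for $b)\Rightarrow a)$ you apply $\Phi^t_\w$, split and shift the improper integral, and close with the cocycle identity $h_{t+s,\w}(\xi)=h_{s,\theta^t\w}(h_{t,\w}(\xi))$, which the paper likewise obtains from a uniqueness argument (there phrased via uniqueness of solutions of \eqref{eq:u(t)=...}, while you phrase it via uniqueness for the center-component Volterra equation — an equivalent and if anything slightly more self-contained justification).
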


\begin{proof}
 From~\eqref{ine:||f_w(x)||<=Lip(f_w)||x||},~\eqref{ine:||phi_w(xi)||<=N||xi||} and~\eqref{ine:||h_n,w(xi)||<=M...} we have
   \begin{align*}
       \norm{f_{\theta^r \w}(h_{r,\w}(\xi), \phi_{\theta^r \w}(h_{r,\w}(\xi)))}
      & \le \Lip(f_{\theta^r \w}) \prts{\norm{h_{r,\w}(\xi)}
         + \norm{\phi_{\theta^r \w}(h_{r,\w}(\xi))}}
         \\
      & \le M(1+N) \Lip(f_{\theta^r \w}) \alpha^{\cc}_{r,\w} \|\xi\|
   \end{align*}
for every $(\w,\xi) \in \cG$.
Thus, by~\ref{eq:T2},
\begin{equation*}
  \int_{-\infty}^0 \norm{\Phi_{\theta^r\w}^{\sss,-r} \fthr} \dr
         \le M(1+N) \tau \|\xi\|,
   \end{equation*}
   and by~\ref{eq:T3} we obtain
   \begin{equation*}
\int_0^{+\infty} \norm{\Phi_{\theta^r\w}^{\uu,-r} \fthr} \dr \le M(1+N) \tau \|\xi\|.
   \end{equation*}
Hence the integrals are convergent.  

Suppose that~\eqref{eq:dyn-split3x} holds for $j=\sss$ and all $(t,\w,\xi) \in \cH$. By applying $\Phi_{\theta^t \w}^{-t}$ to both sides, it is equivalent to
   \begin{equation} \label{eq:equiv1A}
      \phi^\sss_{\w}(\xi)
      = \Phi^{\sss,-t}_{\theta^t \w} \phi^\sss_{\theta^t\w}(h_{t,\w}(\xi))
         - \int_0^t \Phi^{\sss,-r}_{\theta^r \w} \fthr \dr.
   \end{equation}\normalsize
   Using~\ref{eq:T2},~\eqref{ine:||phi_w(xi)||<=N||xi||} and~\eqref{ine:||h_n,w(xi)||<=M...}, for $t \le 0$
   we have
   \begin{equation*}
         \norm{\Phi^{\sss,-t}_{\theta^t \w} \phi^\sss_{\theta^t\w}(h_{t,\w}(\xi))}
      \le MN \alpha^\sss_{-t,\theta^t \w} \ \alpha^\cc_{t,\w} \|\xi\|,
   \end{equation*}
  which converges to zero as $t \to -\infty$ by~\eqref{eq:CondicaoTeo}. Thus, by taking $t \to -\infty$   in equation~\eqref{eq:equiv1A}  we obtain~\eqref{eq:phi^+_w(xi)=...}. 
   Similarly, equation~\eqref{eq:dyn-split3x} with $j=\uu$ can be written as
   \begin{equation} \label{eq:equiv1B}
      \phi^\uu_{\w}(\xi)
      = \Phi^{\uu,-t}_{\theta^t \w} \phi_{\theta^t\w}(h_{t,\w}(\xi))
         - \int_0^t \Phi^{\uu,-r}_{\theta^r \w} \fthr \dr.
   \end{equation}
   Using~\ref{eq:T3},~\eqref{ine:||phi_w(xi)||<=N||xi||} and~\eqref{ine:||h_n,w(xi)||<=M...}, for $t \ge 0$ we have
   \begin{equation*}
      \norm{\Phi^{\uu,-t}_{\theta^t \w} \phi_{\theta^t\w}(h_{t,\w}(\xi))}
      \le MN \alpha^\uu_{-t,\theta^t\w} \alpha^\cc_{t,\w} \norm{\xi},
   \end{equation*}
   which by~\eqref{eq:CondicaoTeo} converges to zero as $t \to +\infty$. Thus we obtain~\eqref{eq:phi^-_w(xi)=...} by taking $t \to +\infty$ in equation~\eqref{eq:equiv1B}.

   For the converse, assume now that~\eqref{eq:phi^+_w(xi)=...} and~\eqref{eq:phi^-_w(xi)=...} hold for all $(\w,\xi) \in \cG$. For all $t \in\R$ , we have 
   \begin{align*}
         \Phi_\w^t \phi^\sss_\w(\xi)
      &
         = \int_t^0  \Phi^{\sss,t-r}_{\theta^r \w} \fthr \dr\\
      &
          \qquad + \int_{-\infty}^0  \Phi^{\sss,-r}_{\theta^{t+r} \w}
            f_{\theta^{t+r} \w} \prts{h_{t+r,\w}\prts{\xi},
               \phi_{\theta^{t+r} \w}\prts{h_{t+r,\w}\prts{\xi}}}\dr
   \end{align*}
 and
   \begin{align*}
         \Phi_\w^t \phi^\uu_\w(\xi)
      &
         = - \int_0^t  \Phi^{\uu,t-r}_{\theta^r \w} \fthr \dr\\
      &
          \qquad - \int_{-\infty}^0  \Phi^{\uu,-r}_{\theta^{t+r} \w}
            f_{\theta^{t+r} \w} \prts{h_{t+r,\w}\prts{\xi},
               \phi_{\theta^{t+r} \w}\prts{h_{t+r,\w}\prts{\xi}}}\dr.
   \end{align*}
  Since $h_{t+s,\w}(\xi)=h_{s,\theta^t\w}(h_{t,\w}(\xi))$ due to the uniqueness of the solution of~\eqref{eq:u(t)=...}, we get the identity~\eqref{eq:dyn-split3x} for $j=\sss$ and $j=\uu$. 
\end{proof}

Consider the operator \( \JJ \), which assigns each pair \( (h, \phi) \in \cX^{(B)}_{M,N} \) to the map \( \JJ(h, \phi) \colon \cH \to X \) given by
\begin{align*}
   \prtsr{\JJ\prts{h,\phi}}(t, \omega, \xi)
   & = \Phi_\omega^t \xi + \int_0^t \Phi_{\theta^r \omega}^{\cc, t-r} \phi(r, \omega) \, dr.
\end{align*}

\begin{lemma}
 $\JJ\prts{\cX^{(B)}_{M,N}}\subseteq\fJ^{(B)}_M.$
\end{lemma}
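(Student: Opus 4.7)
The plan is to verify, for $g := \JJ(h,\phi)$, each of the five defining conditions \eqref{eq:(t,w)|->h_t,w(xi):Borel}--\eqref{ine:||h_n,w(xi)-h_n,w(bxi)||<=M...} of $\fJ^{(B)}_M$ in turn. The Bochner measurability demanded in \eqref{eq:(t,w)|->h_t,w(xi):Borel} is handed to us by the preceding lemma: part~(a) furnishes Bochner measurability of the integrand in $(t,r,\w)$ and part~(b) gives Bochner integrability in $r$ on every closed interval with endpoints $0$ and $t$, which combined with the Bochner measurability of $(t,\w)\mapsto \Phi^t_\w P^\cc_\w x$ built into the setup of $\Phi$ yields what is required for $g$. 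The normalization \eqref{eq:h_0,w(xi)=xi} at $t=0$ is immediate, and \eqref{eq:h_n,w(0)=0} holds because $\Phi^t_\w 0 = 0$ and the chain $h_{r,\w}(0)=0,\ \phi_{\theta^r\w}(0)=0,\ f_{\theta^r\w}(0,0)=0$ (using \eqref{eq:f_w(0)=0}, \eqref{eq:h_n,w(0)=0}, \eqref{eq:phi_w(0)=0}) forces the integrand to vanish identically.

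For the invariance \eqref{h_n,w(xi):in:E(t^n(w))}, starting from $\xi\in E^\cc_\w$ the commutation $P^\cc_{\theta^t\w}\Phi^t_\w = \Phi^t_\w P^\cc_\w$ in the $\cP$-invariant splitting places $\Phi^t_\w\xi$ in $E^\cc_{\theta^t\w}$, and each value of the integrand $\Phi^{\cc,t-r}_{\theta^r\w}(\cdot)=\Phi^{t-r}_{\theta^r\w}P^\cc_{\theta^r\w}(\cdot)$ lies in the same closed subspace $E^\cc_{\theta^t\w}$, which therefore also contains the Bochner integral.

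The decisive step is the Lipschitz estimate \eqref{ine:||h_n,w(xi)-h_n,w(bxi)||<=M...}. Subtracting the defining expressions for $g_{t,\w}(\xi)$ and $g_{t,\w}(\xi')$ and applying \ref{eq:T1} to the linear term $\Phi^t_\w(\xi-\xi')$, which is legitimate since $\xi-\xi'\in E^\cc_\w$, gives a contribution bounded by $\alpha^\cc_{t,\w}\|\xi-\xi'\|$. For the integrand I would chain \eqref{ine:||f_w(x)-f_w(y)||<=Lip(f_w)||x-y||} with the triangle inequality on the $X$-norm to separate $h$- and $\phi$-contributions, apply \eqref{eq:||phi_w(xi)-pi_w(bxi)||<=...} to replace the $\phi$-difference by $N$ times an $h$-difference, and finally apply \eqref{ine:||h_n,w(xi)-h_n,w(bxi)||<=M...} on $h$ itself, arriving at the pointwise bound
\begin{equation*}
\bigl\|\Phi^{\cc,t-r}_{\theta^r\w}[\fthr - f_{\theta^r\w}(h_{r,\w}(\xi'),\phi_{\theta^r\w}(h_{r,\w}(\xi')))]\bigr\| \le M(1+N)\,\alpha^\cc_{t-r,\theta^r\w}\,\Lip(f_{\theta^r\w})\,\alpha^\cc_{r,\w}\,\|\xi-\xi'\|.
\end{equation*}
Integrating (with absolute value to cover $t<0$) and invoking the definition \eqref{def:sigma} of $\sgm$ bounds the integral contribution by $M(1+N)\sgm\,\alpha^\cc_{t,\w}\|\xi-\xi'\|$. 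The identity $M(1+N)\sgm = M-1$ from \eqref{def:M+N} is what closes the estimate: the two pieces sum to exactly $M\,\alpha^\cc_{t,\w}\|\xi-\xi'\|$, which is the required bound. The only real obstacle is this arithmetic closure; the constants $M$ and $N$ were pre-chosen precisely so that the bound returns with the same $M$, and everything else is bookkeeping.
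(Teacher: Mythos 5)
Your proposal is correct and follows essentially the same route as the paper: the easy structural conditions are checked directly, and the Lipschitz bound \eqref{ine:||h_n,w(xi)-h_n,w(bxi)||<=M...} is obtained by splitting off the linear term (bounded by $\alpha^\cc_{t,\w}\|\xi-\xi'\|$ via~\ref{eq:T1}), chaining \eqref{ine:||f_w(x)-f_w(y)||<=Lip(f_w)||x-y||}, \eqref{eq:||phi_w(xi)-pi_w(bxi)||<=...} and \eqref{ine:||h_n,w(xi)-h_n,w(bxi)||<=M...} to get the factor $M(1+N)$ in the integrand, and closing with $\sgm M(1+N)=M-1$ from \eqref{def:M+N}. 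The only difference is that you spell out the measurability and subspace-invariance checks that the paper dismisses as straightforward.
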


\begin{proof}
   Fix a pair $(h,\phi) \in \cX^{(B)}_{M,N}$ . It is straightforward to check that $\JJ\prts{h,\phi}$ satisfies conditions
   \eqref{eq:(t,w)|->h_t,w(xi):Borel}~to~\eqref{h_n,w(xi):in:E(t^n(w))}.        Define
      $$ \gamma_{\theta^r\w}(\xi,\xi')
         = \| f_{\theta^r\w}(h_{r,\w}(\xi),\phi_{\theta^r\w}(h_{r,\w}(\xi)))
            - f_{\theta^r\w}(h_{r,\w}(\xi'),\phi_{\theta^r\w}(h_{r,\w}(\xi')))\|.$$
   From~\eqref{ine:||f_w(x)-f_w(y)||<=Lip(f_w)||x-y||},~\eqref{eq:||phi_w(xi)-pi_w(bxi)||<=...} and~\eqref{ine:||h_n,w(xi)-h_n,w(bxi)||<=M...} we have
   \begin{equation}\label{eq:||f_k(x_k,n(xi),...)-f_k(x_k,n(barxi),...)||}
      \gamma_{\theta^r\w}(\xi,\xi')
      \le \Lip(f_{\theta^r\w}) M(1+N) \|\xi-\xi'\| \alpha^\cc_{r,\w}.
   \end{equation}  
   Following the previous notation,  $\JJ\prts{h,\phi}_{t,\w}(\xi)$ stands for $[\JJ\prts{h,\phi}](t,\w,\xi)$.    By~\ref{eq:T1}, \eqref{def:sigma}, \eqref{eq:||f_k(x_k,n(xi),...)-f_k(x_k,n(barxi),...)||} and~\eqref{def:M+N}, we have
   \begin{equation*}
       \begin{split}
           \| \JJ\prts{h,\phi}_{t,\w}(\xi)
            - \JJ\prts{h,\phi}_{t,\w}(\xi')\|
         & \le \|\Phi^{\cc,t}_\w \| \|\xi - \xi'\|
            + \dint_0^t \|\Phi_{\theta^r\w}^{\cc,t-r}\|  \gamma_{\theta^r\w}(\xi,\xi')\dr\\
         & \leq \prts{1 + \sgm M(1+N) } \alpha^\cc_{t,\w} \|\xi - \xi'\|\\
         & = M  \alpha^\cc_{t,\w}\|\xi - \xi'\|.
      \end{split}
   \end{equation*}
   Hence $\JJ\prts{h,\phi}$ also satisfies
   \eqref{ine:||h_n,w(xi)-h_n,w(bxi)||<=M...}.
\end{proof}

Consider now the operator $\LL$, which assigns each pair $(h,\phi)\in\cX^{(B)}_{M,N}$ the map $\LL(h,\phi)\colon \cG \to X$ given by
\begin{equation*}
   \prtsr{\LL\prts{h,\phi}}(\w,\xi)
   = \prtsr{\LL^\sss\prts{h,\phi}}(\w,\xi) + \prtsr{\LL^\uu\prts{h,\phi}}(\w,\xi)
\end{equation*}
where
\begin{equation*}
   \prtsr{\LL^\sss \prts{h,\phi}}(\w,\xi)
   = \int_{-\infty}^0 \Phi^{\sss,-r}_{\theta^r \w} \fthr \dr
\end{equation*}
and
\begin{equation*}
   \prtsr{\LL^\uu\prts{h,\phi}}(\w,\xi)
   = - \int_0^{+\infty} \Phi^{\uu,-r}_{\theta^r \w} \fthr \dr.
\end{equation*}

\begin{lemma}
   $\LL\prts{\cX^{(B)}_{M,N}}\subseteq\cL^{(B)}_N.$
\end{lemma}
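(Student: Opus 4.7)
The plan is to verify, for an arbitrary $(h,\phi)\in\cX^{(B)}_{M,N}$, the four defining properties \eqref{eq:w|->phi_w(xi):Borel}--\eqref{eq:||phi_w(xi)-pi_w(bxi)||<=...} of membership in $\cL^{(B)}_N$ for the map $\LL(h,\phi)$. Essentially every ingredient has already been prepared: Bochner measurability and integrability of the integrands appearing in $\LL^\sss$ and $\LL^\uu$ are handed to us by the preceding lemma (parts (a) and (c) restricted to $\T=\R$); the quantitative Lipschitz estimate for $r\mapsto f_{\theta^r\w}(h_{r,\w}(\xi),\phi_{\theta^r\w}(h_{r,\w}(\xi)))$ has already been isolated as inequality \eqref{eq:||f_k(x_k,n(xi),...)-f_k(x_k,n(barxi),...)||}; and the constants $M,N$ have been calibrated in \eqref{def:M+N} precisely so that $M(1+N)\tau=N$.

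First I would handle the measurability condition \eqref{eq:w|->phi_w(xi):Borel}: the lemma preceding this one shows Bochner measurability of the two integrands on $\R^-\times\W$ and $\R^+\times\W$; since each half-axis is $\sigma$-finite, a standard argument (exhausting by $[-n,0]$ and $[0,n]$ and passing to the pointwise limit of the Bochner integrals over these intervals) yields Bochner measurability of $\w\mapsto[\LL^\sss(h,\phi)](\w,P^\cc_\w x)$ and $\w\mapsto[\LL^\uu(h,\phi)](\w,P^\cc_\w x)$. The vanishing condition \eqref{eq:phi_w(0)=0} is immediate: since $h_{r,\w}(0)=0$ by \eqref{eq:h_n,w(0)=0}, $\phi_{\theta^r\w}(0)=0$ by \eqref{eq:phi_w(0)=0}, and $f_{\theta^r\w}(0)=0$ by \eqref{eq:f_w(0)=0}, both integrands defining $\LL^\sss$ and $\LL^\uu$ vanish identically when $\xi=0$.

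For the range condition \eqref{eq:phi_w(xi):in:F_w}, I use the invariance property (v) of the $\cP$-invariant splitting: taking $t=-r$ and replacing $\w$ by $\theta^r\w$ gives $P^\sss_\w\Phi^{-r}_{\theta^r\w}=\Phi^{-r}_{\theta^r\w}P^\sss_{\theta^r\w}$, so $\Phi^{\sss,-r}_{\theta^r\w}=P^\sss_\w\circ\Phi^{-r}_{\theta^r\w}\circ P^\sss_{\theta^r\w}$ takes values in the closed subspace $E^\sss_\w$. Since Bochner integration of a function with values in a closed subspace stays in that subspace, $[\LL^\sss(h,\phi)](\w,\xi)\in E^\sss_\w$, and symmetrically $[\LL^\uu(h,\phi)](\w,\xi)\in E^\uu_\w$; hence their sum lies in $E^\sss_\w\oplus E^\uu_\w$.

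Finally, for the Lipschitz estimate \eqref{eq:||phi_w(xi)-pi_w(bxi)||<=...}, I combine the bounds \ref{eq:T2}, \ref{eq:T3} on $\|\Phi^{\sss,-r}_{\theta^r\w}\|$ and $\|\Phi^{\uu,-r}_{\theta^r\w}\|$ with the estimate \eqref{eq:||f_k(x_k,n(xi),...)-f_k(x_k,n(barxi),...)||} already derived in the proof of the previous lemma, obtaining
\begin{align*}
   \|[\LL(h,\phi)](\w,\xi)-[\LL(h,\phi)](\w,\xi')\|
   & \le \int_{-\infty}^0 \alpha^\sss_{-r,\theta^r\w}\,\gamma_{\theta^r\w}(\xi,\xi')\dr
      + \int_0^{+\infty}\alpha^\uu_{-r,\theta^r\w}\,\gamma_{\theta^r\w}(\xi,\xi')\dr\\
   & \le M(1+N)\,\tau\,\|\xi-\xi'\|
   = N\,\|\xi-\xi'\|,
\end{align*}
where the last equality is exactly the calibration \eqref{def:M+N}. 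The only step that is not pure bookkeeping is the range argument; my main worry would be justifying carefully that a Bochner integral of an $E^\sss_\w$-valued function lies in $E^\sss_\w$ (which reduces to applying the bounded linear projection $P^\sss_\w$ inside the integral).
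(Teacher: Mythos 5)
Your proposal is correct and follows essentially the same route as the paper: the paper likewise dismisses conditions \eqref{eq:w|->phi_w(xi):Borel}--\eqref{eq:phi_w(xi):in:F_w} as immediate and devotes the proof to the Lipschitz bound, which it obtains exactly as you do, by combining \ref{eq:T2}, \ref{eq:T3}, the estimate \eqref{eq:||f_k(x_k,n(xi),...)-f_k(x_k,n(barxi),...)||}, the definition \eqref{def:tau} of $\tau$, and the calibration $M(1+N)\tau=N$ from \eqref{def:M+N}. Your extra care on the range condition (commuting the bounded projection $P^\sss_\w$ with the Bochner integral, using $P^\sss_\w\Phi^{-r}_{\theta^r\w}=\Phi^{-r}_{\theta^r\w}P^\sss_{\theta^r\w}$) is a valid filling-in of what the paper leaves implicit.
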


\begin{proof}
   Fix $\prts{h,\phi}\in\cX^{(B)}_{M,N}$. It is immediate to check that $\prtsr{\LL\prts{h,\phi}}(\w,\xi)$ satisfies conditions \eqref{eq:w|->phi_w(xi):Borel}~to~\eqref{eq:phi_w(xi):in:F_w}.
   Again, $\LL\prts{h,\phi}_\w(\xi)$ stands for  $\prtsr{\LL\prts{h,\phi}}(\w,\xi)$. From~\ref{eq:T2},~\ref{eq:T3}, ~\eqref{eq:||f_k(x_k,n(xi),...)-f_k(x_k,n(barxi),...)||},~\eqref{def:tau} and~\eqref{def:M+N} we have
   \begin{equation*}%\label{ine:||(L_n(x,phi))(xi)||}
      \begin{split}
         & \norm{\LL\prts{h,\phi}_\w(\xi) - \LL\prts{h,\phi}_\w(\xi')}\\
         & \le \dint_{-\infty}^{0} \norm{\Phi^{\sss,-r}_{\theta^r \w}}
            \, \gamma_{\theta^r\w}(\xi,\xi') \dr +\dint_0^{+\infty} \norm{\Phi^{\uu,-r}_{\theta^r \w}}
            \, \gamma_{\theta^r\w}(\xi,\xi') \dr\\
         & \le \tau M \prts{1 + N} \norm{\xi-\xi'}\\
         & = N \norm{\xi-\xi'}.
      \end{split}
   \end{equation*}
   Hence~\eqref{eq:||phi_w(xi)-pi_w(bxi)||<=...} also holds for $\LL(h,\phi)$.
\end{proof}

Consider now $\TT \colon \cX^{(B)}_{M,N}  \to \cX^{(B)}_{M,N} $ given by
   $$ \TT(h,\phi) = \prts{\JJ(h,\phi), \LL(h,\phi)}.$$

\begin{lemma}%\label{lemma:contraction}
   The operator $\TT$ is a contraction in $ (\cX^{(B)}_{M,N},d)$.
\end{lemma}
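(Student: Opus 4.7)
The plan is to derive a Lipschitz-type estimate for the difference inside the integrals defining $\JJ$ and $\LL$ that mixes the two metrics $d_1$ and $d_2$, to propagate that estimate through the integrals using the weights $\sgm$ and $\tau$, and finally to read off the contraction constant from the calibration~\eqref{def:M+N} of $M$ and $N$.

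First I will fix $(h,\phi),(g,\psi)\in\cX^{(B)}_{M,N}$ and estimate the pointwise quantity
\[
\delta_{\theta^r\w}(\xi) = \norm{f_{\theta^r\w}\prts{h_{r,\w}(\xi),\phi_{\theta^r\w}(h_{r,\w}(\xi))} - f_{\theta^r\w}\prts{g_{r,\w}(\xi),\psi_{\theta^r\w}(g_{r,\w}(\xi))}}.
\]
Applying~\eqref{ine:||f_w(x)-f_w(y)||<=Lip(f_w)||x-y||} to $f_{\theta^r\w}$, the triangle inequality in $E^\cc_{\theta^r\w}\oplus E^\sss_{\theta^r\w}\oplus E^\uu_{\theta^r\w}$, and the split
\[
\phi_{\theta^r\w}(h_{r,\w}(\xi))-\psi_{\theta^r\w}(g_{r,\w}(\xi)) = \prtsr{\phi_{\theta^r\w}(h_{r,\w}(\xi))-\phi_{\theta^r\w}(g_{r,\w}(\xi))}+\prtsr{\phi_{\theta^r\w}(g_{r,\w}(\xi))-\psi_{\theta^r\w}(g_{r,\w}(\xi))},
\]
I bound the first bracket by $N\norm{h_{r,\w}(\xi)-g_{r,\w}(\xi)}$ via~\eqref{eq:||phi_w(xi)-pi_w(bxi)||<=...} and the second by $M\,d_2(\phi,\psi)\alpha^\cc_{r,\w}\norm{\xi}$ via~\eqref{def:d_2(phi,psi)} and~\eqref{ine:||h_n,w(xi)||<=M...}. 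Combining with $\norm{h_{r,\w}(\xi)-g_{r,\w}(\xi)}\le d_1(h,g)\alpha^\cc_{r,\w}\norm{\xi}$ from~\eqref{def:d_1(x,y)} gives
\[
\delta_{\theta^r\w}(\xi) \le \Lip(f_{\theta^r\w})\,\alpha^\cc_{r,\w}\,\norm{\xi}\,\prtsr{(1+N)\,d_1(h,g)+M\,d_2(\phi,\psi)}.
\]

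Next I will plug this estimate into the defining integrals. Using~\ref{eq:T1} together with~\eqref{def:sigma} yields
\[
\norm{\JJ(h,\phi)_{t,\w}(\xi)-\JJ(g,\psi)_{t,\w}(\xi)} \le \sgm\,\alpha^\cc_{t,\w}\norm{\xi}\prtsr{(1+N)\,d_1(h,g)+M\,d_2(\phi,\psi)},
\]
and, treating $\LL^\sss$ and $\LL^\uu$ separately with~\ref{eq:T2},~\ref{eq:T3} and~\eqref{def:tau},
\[
\norm{\LL(h,\phi)_{\w}(\xi)-\LL(g,\psi)_{\w}(\xi)} \le \tau\,\norm{\xi}\prtsr{(1+N)\,d_1(h,g)+M\,d_2(\phi,\psi)}.
\]
Taking suprema in the definitions of $d_1$ and $d_2$ and adding the two estimates gives
\[
d(\TT(h,\phi),\TT(g,\psi)) \le (\sgm+\tau)\prtsr{(1+N)\,d_1(h,g)+M\,d_2(\phi,\psi)}.
\]

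Finally, the identities~\eqref{def:M+N} yield $(\sgm+\tau)(1+N)=(M-1+N)/M$ and $(\sgm+\tau)M=(M-1+N)/(1+N)$, both of which lie strictly in $(0,1)$ because $M\in\,]1,2[$ and $N\in\,]0,1[$. Setting $\lbd=\max\set{(M-1+N)/M,\,(M-1+N)/(1+N)}<1$, the last display collapses to
\[
d(\TT(h,\phi),\TT(g,\psi)) \le \lbd\,d((h,\phi),(g,\psi)),
\]
which is the desired contraction. The only delicate point is the triangle-inequality split in the second slot of $f$, which introduces the cross-coefficients $(1+N)$ and $M$; these are tailor-made for the calibration in~\eqref{def:M+N}, so after that step the remainder is a routine integration against the weights $\sgm$ and $\tau$.
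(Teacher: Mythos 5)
Your proof is correct and follows essentially the same route as the paper: the same mixed Lipschitz estimate $\delta_{\theta^r\w}(\xi)\le \Lip(f_{\theta^r\w})\alpha^\cc_{r,\w}\norm{\xi}\prtsr{(1+N)d_1(h,g)+Md_2(\phi,\psi)}$, propagated through the integrals via $\sgm$ and $\tau$. The only (immaterial) difference is the last step: the paper bounds $(1+N)d_1+Md_2\le\maxs{1+N,M}\,d$ and invokes $\sgm+\tau<1/2$ with $\maxs{1+N,M}<2$, whereas you use the exact identities from~\eqref{def:M+N} to get the slightly sharper constant $\max\set{(M-1+N)/M,\,(M-1+N)/(1+N)}<1$; both are valid.
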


\begin{proof}
   Consider $\prts{h,\phi}, \prts{g,\psi} \in \cX^{(B)}_{M,N}$. Define
   \begin{equation*}
               \hat\gamma_{\theta^r\w}(\xi)= \|f_{\theta^r\w}(h_{r,\w}(\xi),\phi_{\theta^r\w}(h_{r,\w}(\xi)))
            - f_{\theta^r\w}(g_{r,\w}(\xi),\psi_{\theta^r\w}(g_{r,\w}(\xi)))\|.        
   \end{equation*}
  By~\eqref{ine:||f_w(x)-f_w(y)||<=Lip(f_w)||x-y||}, \eqref{eq:||phi_w(xi)-pi_w(bxi)||<=...}, ~\eqref{def:d_1(x,y)},~\eqref{def:d_2(phi,psi)} and~\eqref{ine:||h_n,w(xi)||<=M...}, for all $(r,\w) \in \R_0^+\x\W$ and  all $\xi\in E_\w$,
   \begin{equation}
      \begin{split}
          \hat\gamma_{\theta^r\w}(\xi)
         & \le \Lip(f_{\theta^r\w})  \alpha^\cc_{r,\w}
            \prts{(1+N)d_1(h,g) + M d_2(\phi,\psi)}\|\xi\|.\label{ine:hat_gamma<=...}
      \end{split}
   \end{equation}
    Hence,  in one hand, from~\ref{eq:T1}, \eqref{ine:hat_gamma<=...} and~\eqref{def:sigma}, we have
   \begin{align*}
         \|\JJ(h,\phi)_{t,\w}(\xi) - \JJ(g,\psi)_{t,\w}(\xi)\|
      & 
         \le  \dint_0^t \norm{\Phi_{\theta^r\w}^{\cc,t-r}} \hat\gamma_{\theta^r\w}(\xi) \dr\\
      &
         \le \sgm  \alpha^\cc_{t,\w}\prts{(1+N)d_1(h,g)
            + M d_2(\phi,\psi)}\norm{\xi},
   \end{align*}
   which implies
      $$ d_1\prts{\JJ(h,\phi),\JJ(g,\psi)}
         \le \sgm   \prts{(1+N)d_1(h,g) + M d_2(\phi,\psi)}.$$
   On the other hand, from~\ref{eq:T2},~\ref{eq:T3},~\eqref{ine:hat_gamma<=...} and~\eqref{def:tau} we get
   \begin{align*}
      & 
         \|\LL\prts{h,\phi}_\w(\xi) - \LL\prts{g,\psi}_\w(\xi)\|\\
      &
         \le \dint_{-\infty}^{0} \norm{\Phi^{\sss,-r}_{\theta^r \w}}
            \, \hat\gamma_{\theta^r\w}(\xi) \dr +\dint_0^{+\infty} \norm{\Phi^{\uu,-r}_{\theta^r \w}}
            \, \hat\gamma_{\theta^r\w}(\xi) \dr\\
      & \le \tau 
         \prts{(1+N)d_1(h,g) + M d_2(\phi,\psi)}\|\xi\|,
   \end{align*}
which implies
      $$ d_2\prts{\LL(h,\phi),\LL(g,\psi)}
         \le \tau \prts{(1+N)d_1(h,g) + M d_2(\phi,\psi)}.$$
In overall we get
   \begin{equation*}
      \begin{split}
         d\prts{\TT(h,\phi),\TT(g,\psi)}
         & \le \prts{\sgm + \tau} \prts{(1+N)d_1(h,g)+ M d_2(\phi,\psi)} \\
         & \le \dfrac{1}{2} \maxs{1+N,M} d((h,\phi),(g,\psi))
      \end{split}
   \end{equation*}
   and because $N<1$ and $M<2$, $\TT$ is a contraction.
\end{proof}

\begin{proof}[Proof of Theorem~\ref{thm:global}] 
   Since $\TT$ is a contraction, by the Banach Fixed Point Theorem, $\TT$ has a unique fixed point $(h,\phi)$, that satisfies~\eqref{eq:dyn-split3a}, \eqref{eq:phi^+_w(xi)=...} and~\eqref{eq:phi^-_w(xi)=...}. By Lemma~\ref{lemma:equiv}, the pair $(h,\phi)$ also satisfies conditions~\eqref{eq:dyn-split3x}. Therefore, for given initial condition $x_\w=(\xi,\phi_\w^\sss(\xi),\phi_\w^\uu(\xi)) \in E_\w^\cc\times E_\w^\sss\times E_\w^\uu$, the trajectory $x_{\theta^t\w}=\prts{h_{t,\w}(\xi), \phi_{\theta^t\w}(h_{t,\w}(\xi))}$ is the solution of~\eqref{eq:u(t)=...}. The graphs $\cV_{\phi,\w}$ are the required invariant manifolds of $\Psi$. To obtain \eqref{thm:ineq:norm:F_mn(xi...)-F_mn(barxi...)}, it follows from~\eqref{eq:||phi_w(xi)-pi_w(bxi)||<=...},~\eqref{ine:||h_n,w(xi)-h_n,w(bxi)||<=M...} and~\eqref{def:M+N} that, for each $(t,\w,\xi), (t,\w,\xi')\in\cH$
   \begin{align*}
      &
         \|\Psi_{\w}^t(\xi,\phi_\w^\sss(\xi),\phi_\w^\uu(\xi)) - \Psi_{\w}^t(\xi',\phi_\w^\sss(\xi'),\phi_\w^\uu(\xi'))\|\\
      & 
         = \|\prts{h_{t,\w}(\xi),\phi_{\theta^t\w}(h_{t,\w}(\xi))}
            - \prts{h_{t,\w}(\xi'),\phi_{\theta^t\w}(h_{t,\w}(\xi'))}\|\\
      & 
         \le M(1+N) \alpha^\cc_{t,\w} \|\xi-\bar \xi\|\\
      & 
         \le\frac{N}{\tau} \alpha^\cc_{t,\w} \|\xi-\bar \xi\|.
   \end{align*}
\end{proof}

\section{Invariant manifolds in discrete-time RDS}\label{sec:disc_time}

Throughout this section we consider $\T = \Z$.
Given a measurable linear RDS $\Phi$  and a map $f \in \sF$, we define
\begin{align*}
   \sgm_\w^-
   &= \sup\limits_{n\in\N}\dfrac{1}{\alpha^\cc_{-n,\w}}
      \dsum_{k=-n}^{-1} \alpha^\cc_{-n-k-1,\theta^{k+1}\w} \Lip(f_{\theta^k\w}) \alpha^\cc_{k,\w}\\
   \sgm_\w^+
   &= \sup\limits_{n\in\N}\dfrac{1}{\alpha^\cc_{n,\w}}
      \dsum_{k=0}^{n-1} \alpha^\cc_{n-k-1,\theta^{k+1}\w} \Lip(f_{\theta^k\w}) \alpha^\cc_{k,\w}
\end{align*}
and 
\begin{equation*}
   \sgm
   = \sup\limits_{\w\in\W}\max\left\{\sgm_\w^-,\sgm_\w^+\right\}.
\end{equation*}
Moreover, writing
\begin{align*}
	\tau_\w^-
	&= \dsum_{k=-\infty}^{-1}
      \alpha^\sss_{-k-1,\theta^{k+1}\w} \Lip(f_{\theta^k\w}) \alpha^\cc_{k,\w}\\
      \tau_\w^+
      &=\dsum_{k=0}^{+\infty}
      \alpha^\uu_{-k-1,\theta^{k+1}\w} \Lip(f_{\theta^k\w}) \alpha^\cc_{k,\w}
\end{align*}
we also define
\begin{equation*}
   \tau
   = \sup_{\w \in \W}\, (\tau_\w^-+\tau_\w^+).
\end{equation*}
Consider the measurable RDS $\Psi\colon \Z\x\W\x X\to X$ given by
\begin{equation}\label{eq:Psi}
   \Psi_\w^n(x)=
   \begin{cases}
   \Phi_{\w}^n x + \dsum_{k=0}^{n-1} \Phi_{\theta^{k+1}\w}^{n-k-1} f_{\theta^k\w}(\Psi_{\w}^k(x))&\,\text{ if } n\geq1\\
      x&\,\text{ if } n=0\\
   \Phi_{\w}^n x - \dsum_{k=n}^{-1} \Phi_{\theta^{k+1}\w}^{n-k-1} f_{\theta^k\w}(\Psi_{\w}^k(x))&\,\text{ if } n\leq-1
   \end{cases}
\end{equation}
which encapsulates the solutions of the random nonlinear difference equation
\begin{equation*}%\label{eq:dyn}
      x_{n+1}=\Phi_{\theta^n\w}^1x_n+f_{\theta^n\w}(x_n).
\end{equation*}

\begin{theorem} \label{thm:global:disc}
   Let $\Phi$ be a measurable linear RDS exhibiting an $\alpha$-trichotomy and let $f \in \sF$. If
   \begin{equation*}
     \lim_{n \to -\infty} \alpha^\sss_{-n,\theta^n \w} \alpha^\cc_{n,\w} = \lim_{n \to +\infty} \alpha^\uu_{-n,\theta^n \w} \alpha^\cc_{n,\w} = 0
   \end{equation*}
  for all $\w \in \W$, and
   \begin{equation*}
      \sgm +  \tau < 1/2,
   \end{equation*}
   then there are $N \in \ ]0,1[$ and a unique $\phi \in \cL_N$ such that for the RDS  $\Psi$ given by~\eqref{eq:Psi} we have
   \begin{equation}\label{eq:thm:global:invariance:discrete}
      \Psi_{\w}^n(\cV_{\phi,\w})
      \subseteq \cV_{\phi,\theta^n\w}
   \end{equation}
 for all $(n,\w) \in \Z \x \W$.  Moreover,  for every $(n, \w, \xi), (n, \w, \xi') \in \cH$ we have 
   \begin{equation*}
      \|\Psi_{\w}^n(\xi,\phi_\w(\xi)) - \Psi_{\w}^n(\xi',\phi_\w(\xi'))\|
      \le ({N}/{\tau}) \alpha^\cc_{n,\w} \, \|\xi - \xi'\|.
   \end{equation*}
  
\end{theorem}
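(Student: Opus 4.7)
The plan is to transcribe the continuous-time proof of Section~\ref{sec:cont_time} to the discrete setting, with sums replacing integrals and ordinary measurability replacing Bochner measurability. First I would invoke \cite[Lemma~5.1]{Bento_Costa-EJQTDE-2017}, exactly as in~\eqref{def:M+N}, to fix constants $M\in\,]1,2[$ and $N\in\,]0,1[$ with $\sgm=(M-1)/(M(1+N))$ and $\tau=N/(M(1+N))$. Working on $(\cX_{M,N},d)$, I would then define the discrete counterparts of $\JJ$, $\LL$ and $\TT$: for $(h,\phi)\in\cX_{M,N}$ set
\begin{equation*}
\JJ(h,\phi)_{n,\w}(\xi)=\Phi_\w^n\xi+\sum_{k=0}^{n-1}\Phi_{\theta^{k+1}\w}^{\cc,n-k-1}f_{\theta^k\w}\bigl(h_{k,\w}(\xi),\phi_{\theta^k\w}(h_{k,\w}(\xi))\bigr)\qquad(n\ge 1),
\end{equation*}
by the mirrored expression with $-\sum_{k=n}^{-1}$ (as in the backward branch of~\eqref{eq:Psi}) for $n\le -1$, and
\begin{equation*}
\LL(h,\phi)_\w(\xi)=\sum_{k=-\infty}^{-1}\Phi_{\theta^{k+1}\w}^{\sss,-k-1}f_{\theta^k\w}\bigl(h_{k,\w}(\xi),\phi_{\theta^k\w}(h_{k,\w}(\xi))\bigr)-\sum_{k=0}^{+\infty}\Phi_{\theta^{k+1}\w}^{\uu,-k-1}f_{\theta^k\w}\bigl(h_{k,\w}(\xi),\phi_{\theta^k\w}(h_{k,\w}(\xi))\bigr),
\end{equation*}
with $\TT=(\JJ,\LL)$. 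Absolute convergence of the series defining $\LL$ is immediate from the pointwise estimate $\|f_{\theta^k\w}(\ldots)\|\le M(1+N)\Lip(f_{\theta^k\w})\alpha^\cc_{k,\w}\|\xi\|$ together with~\ref{eq:T2}--\ref{eq:T3} and the finiteness of $\tau_\w^\pm$.

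Next I would establish three discrete lemmas in direct analogy with those of Section~\ref{sec:cont_time}. First, $\JJ(\cX_{M,N})\subseteq\fJ_M$: conditions~\eqref{eq:(t,w)|->h_t,w(xi):Borel}--\eqref{h_n,w(xi):in:E(t^n(w))} are immediate, and the Lipschitz bound $M\alpha^\cc_{n,\w}$ uses~\ref{eq:T1}, the definition of $\sgm_\w^\pm$ (covering both the forward and the backward $n$), and the identity $1+\sgm M(1+N)=M$. Second, $\LL(\cX_{M,N})\subseteq\cL_N$: the $N$-Lipschitz bound uses~\ref{eq:T2}--\ref{eq:T3}, the definition of $\tau_\w^\pm$, and $\tau M(1+N)=N$. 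Third, $\TT$ is a contraction on $(\cX_{M,N},d)$ with ratio $\tfrac12\max\{1+N,M\}<1$. Each computation is the pointwise transcription of its continuous-time counterpart, with the sum $\sum_{k=0}^{n-1}\alpha^\cc_{n-k-1,\theta^{k+1}\w}\Lip(f_{\theta^k\w})\alpha^\cc_{k,\w}$ playing the role of $\int_0^t\alpha^\cc_{t-r,\theta^r\w}\Lip(f_{\theta^r\w})\alpha^\cc_{r,\w}\,dr$, and similarly for the stable and unstable bounds.

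The key geometric step is the discrete analog of Lemma~\ref{lemma:equiv}: for $(h,\phi)\in\cX_{M,N}$ with $h=\JJ(h,\phi)$, the graph-invariance identities
\begin{equation*}
\phi^j_{\theta^n\w}(h_{n,\w}(\xi))=\Phi_\w^n\phi^j_\w(\xi)+\sum_{k=0}^{n-1}\Phi_{\theta^{k+1}\w}^{j,n-k-1}f_{\theta^k\w}\bigl(h_{k,\w}(\xi),\phi_{\theta^k\w}(h_{k,\w}(\xi))\bigr),\qquad j\in\{\sss,\uu\},
\end{equation*}
are equivalent to $\phi=\LL(h,\phi)$. The forward direction applies $\Phi_{\theta^n\w}^{-n}$ restricted to $E^j$ to both sides, bounds the boundary term by $MN\alpha^\sss_{-n,\theta^n\w}\alpha^\cc_{n,\w}\|\xi\|$ (respectively with $\uu$), and sends it to zero via the decay hypothesis by letting $n\to-\infty$ (respectively $n\to+\infty$); the converse uses the discrete cocycle identity $h_{n+k,\w}=h_{k,\theta^n\w}\circ h_{n,\w}$ that follows from uniqueness of the recursion $x_{n+1}=\Phi_{\theta^n\w}^1x_n+f_{\theta^n\w}(x_n)$ encoded in~\eqref{eq:Psi}. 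The Banach Fixed Point Theorem then produces a unique fixed point $(h,\phi)\in\cX_{M,N}$ of $\TT$; the equivalence lemma gives~\eqref{eq:thm:global:invariance:discrete}, and the Lipschitz estimate follows from~\eqref{ine:||h_n,w(xi)-h_n,w(bxi)||<=M...},~\eqref{eq:||phi_w(xi)-pi_w(bxi)||<=...} and $M(1+N)=N/\tau$, exactly as in the continuous case. The chief obstacle I foresee is purely bookkeeping: reconciling the sign and summation range of~\eqref{eq:Psi} between the two branches $n\ge 1$ and $n\le -1$ so that $\sgm_\w^+$ and $\sgm_\w^-$ enter on the correct sides of every estimate, and keeping the same telescoping consistent in the converse of the equivalence lemma.
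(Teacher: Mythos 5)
Your proposal is correct and follows essentially the same route as the paper: the paper itself only sketches the discrete proof as a transcription of the continuous-time argument, fixing $M$ and $N$ via \cite[Lemma 5.1]{Bento_Costa-EJQTDE-2017}, defining the same discrete operators $\JJ$, $\LL$, $\TT$ on $\cX_{M,N}$, proving the discrete equivalence lemma (Lemma~\ref{lemma:equiv:discrete}) by the same boundary-term limit argument and cocycle identity, and concluding with the Banach Fixed Point Theorem and the estimate $M(1+N)=N/\tau$. All of your intermediate constants, summation ranges and signs match those in Section~\ref{sec:disc_time}.
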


The proof of Theorem~\ref{thm:global:disc} is analogous to the proof of Theorem~\ref{thm:global}. Therefore, in the remainder of this section, we provide a guide to the necessary adaptations.
 Fix $M$ and $N$ as in~\eqref{def:M+N}. Given $\w\in \W$ and $$x_\w=(x_\w^\cc,x_\w^\sss,x_\w^\uu)\in E_\w^\cc\x E_\w^\sss\x E_\w^\uu,$$
 the trajectory
 $$x_{\theta^n\w}=\Psi_\w^nx_\w=\prts{x_{\theta^n\w}^\cc,x_{\theta^n\w}^\sss,x_{\theta^n\w}^\uu}\in E_\w^\cc\x E_\w^\sss\x E_\w^\uu$$ satisfies the following equations for each $i\in\{\cc,\sss,\uu\}$:
\begin{equation}      \label{eq:dyn-split1a:discrete}
 x_{\theta^n\w}^i=\begin{cases}
   \Phi_{\w}^{i,n} x_\w^i + \dsum_{k=0}^{n-1} \Phi_{\theta^{k+1}\w}^{i,n-k-1} f_{\theta^k\w}(x_{\theta^k\w}^\cc,x_{\theta^k\w}^\sss,x_{\theta^k\w}^\uu)&\,\text{ if } n\geq 1 \\[5mm]
\Phi_{\w}^{i,n} x_\w^i - \dsum_{k=n}^{-1} \Phi_{\theta^{k+1}\w}^{i,n-k-1} f_{\theta^k\w}(x_{\theta^k\w}^\cc,x_{\theta^k\w}^\sss,x_{\theta^k\w}^\uu)&\,\text{ if } n\leq -1.
   \end{cases}
\end{equation}
In view of the invariance required in \eqref{eq:thm:global:invariance:discrete},  if $x_{\w}\in\cV_{\phi,\w}$ then 
$x_{\theta^n\w}$   must be in
$\cV_{\phi,\theta^n\w}$ for every $n \in \Z$, and thus, in this situation, the
equations from~\eqref{eq:dyn-split1a:discrete} can be written as
\begin{align}
x_{\theta^n\w}^\cc&=\begin{cases}
   \Phi_{\w}^{\cc,n} x_\w^\cc + \dsum_{k=0}^{n-1} \Phi_{\theta^{k+1}\w}^{\cc,n-k-1} f_{\theta^k\w}(x_{\theta^k\w}^\cc,\varphi_{\theta^k\w}(x_{\theta^k\w}^\cc))&\,\text{ if } n\geq 1 \\[5mm]
\Phi_{\w}^{\cc,n} x_\w^\cc - \dsum_{k=n}^{-1} \Phi_{\theta^{k+1}\w}^{\cc,n-k-1} f_{\theta^k\w}(x_{\theta^k\w}^\cc,\varphi_{\theta^k\w}(x_{\theta^k\w}^\cc))&\,\text{ if } n\leq -1
   \end{cases}
         \label{eq:dyn-split2a:discrete}
\end{align}
and, for $j\in\{\sss,\uu\}$,
\begin{align}
   \phi_{\theta^n\w}^j(x_{\theta^n\w})  &=\begin{cases}
   \Phi_{\w}^{j,n}  \phi_{\w}(x_\w) +\dsum_{k=0}^{n-1} \Phi_{\theta^{k+1}\w}^{j,n-k-1}f_{\theta^{k}\w}(x_{\theta^{k}\w}^\cc,\phi_{\theta^k\w}(x_{\theta^{k}\w}^\cc))&\,\text{ if } n\geq 1 \\[5mm]
\Phi_{\w}^{j,n}  \phi_{\w}(x_\w) -\dsum_{k=n}^{-1} \Phi_{\theta^{k+1}\w}^{j,n-k-1}f_{\theta^{k}\w}(x_{\theta^{k}\w}^\cc,\phi_{\theta^k\w}(x_{\theta^{k}\w}^\cc))&\,\text{ if } n\leq -1
   \end{cases}.
      \label{eq:dyn-split2b:discrete}
\end{align}
Let us prove prove that equations~\eqref{eq:dyn-split2a:discrete} and~\eqref{eq:dyn-split2b:discrete} have
solutions.
First, we rewrite them, by a discrete version of Lemma~\ref{lemma:equiv}.
\begin{lemma} \label{lemma:equiv:discrete}
   Consider $(h,\phi) \in \cX_{M,N}$ such that, for all $(n,\w) \in \Z\x\W$ and all $\xi \in E_\w^\cc$
   \begin{align}
h_{n,\w}(\xi)&=\begin{cases}
   \Phi_{\w}^{\cc,n} \xi + \dsum_{k=0}^{n-1} \Phi_{\theta^{k+1}\w}^{\cc,n-k-1} f_{\theta^k\w}(h_{k,\w}(\xi),\varphi_{\theta^k\w}(h_{k,\w}(\xi)))&\,\text{ if } n\geq 1 \\[5mm]
\Phi_{\w}^{\cc,n} \xi - \dsum_{k=n}^{-1} \Phi_{\theta^{k+1}\w}^{\cc,n-k-1} f_{\theta^k\w}(h_{k,\w}(\xi),\varphi_{\theta^k\w}(h_{k,\w}(\xi)))&\,\text{ if } n\leq -1
   \end{cases}.
               \label{eq:dyn-split3a:discrete}
   \end{align}
Then the following conditions $a)$ and $b)$ are equivalent:
   \begin{enumerate}[\lb=$\alph*)$,\lm=5mm]
      \item For each $j\in\{\uu,\sss\}$ and all $(n,\w,\xi) \in \cH$
       \begin{align}
   \phi_{\theta^n\w}^j(h_{n,\w}(\xi))  &=\begin{cases}
   \Phi_{\w}^{j,n}  \phi_{\w}(\xi) +\dsum_{k=0}^{n-1} \Phi_{\theta^{k+1}\w}^{j,n-k-1}f_{\theta^{k}\w}(h_{k,\w}(\xi),\phi_{\theta^k\w}(h_{k,\w}(\xi)))&\,\text{ if } n\geq 1 \\[5mm]
\Phi_{\w}^{j,n}  \phi_{\w}(\xi) -\dsum_{k=n}^{-1} \Phi_{\theta^{k+1}\w}^{j,n-k-1}f_{\theta^{k}\w}(h_{k,\w}(\xi),\phi_{\theta^k\w}(h_{k,\w}(\xi)))&\,\text{ if } n\leq -1
   \end{cases}
            \label{eq:dyn-split3b:discrete}
        \end{align}
      \item For all $(\w,\xi) \in \cG$
      \begin{equation} \label{eq:phi_n:discrete_ss}
            \phi_{\w}^\sss(\xi)
               = \sum_{k=-\infty}^{-1} \Phi_{\theta^{k+1}\w}^{\sss,-(k+1)} f_{\theta^{k}\w}(h_{k,\w}(\xi),\phi_{\theta^{k}\w}(h_{k,\w}(\xi))).
        \end{equation}
        and
        \begin{equation} \label{eq:phi_n:discrete_uu}
            \phi_{\w}^\uu(\xi)
               = -\sum_{k=0}^{+\infty} \Phi_{\theta^{k+1}\w}^{\uu,-(k+1)} f_{\theta^{k}\w}(h_{k,\w}(\xi),\phi_{\theta^{k}\w}(h_{k,\w}(\xi)))   .         
        \end{equation}
   \end{enumerate}
\end{lemma}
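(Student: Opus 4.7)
The plan is to mirror the proof of Lemma~\ref{lemma:equiv}, replacing integrals by series and with the role of $\Phi^{t-r}_{\theta^r\w}$ played by $\Phi^{n-k-1}_{\theta^{k+1}\w}$. I would open by establishing absolute convergence of the two series in \eqref{eq:phi_n:discrete_ss} and \eqref{eq:phi_n:discrete_uu}. Using \eqref{ine:||f_w(x)||<=Lip(f_w)||x||}, \eqref{ine:||phi_w(xi)||<=N||xi||} and \eqref{ine:||h_n,w(xi)||<=M...}, each term $f_{\theta^k\w}(h_{k,\w}(\xi), \phi_{\theta^k\w}(h_{k,\w}(\xi)))$ is bounded in norm by $M(1+N)\Lip(f_{\theta^k\w}) \alpha^\cc_{k,\w} \|\xi\|$. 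Combined with~\ref{eq:T2}--\ref{eq:T3}, the two series are then dominated by $M(1+N)\tau_\w^-\|\xi\|$ and $M(1+N)\tau_\w^+\|\xi\|$ respectively, so they converge absolutely by the finiteness of $\tau$.

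For the implication $a)\Rightarrow b)$, I fix $j=\sss$ and take $n\le -1$ in \eqref{eq:dyn-split3b:discrete}. Using the cocycle identity $\Phi^{\sss,-k-1}_{\theta^{k+1}\w} = \Phi^{\sss,-n}_{\theta^n\w}\,\Phi^{\sss,n-k-1}_{\theta^{k+1}\w}$ on $E^\sss$, I apply $\Phi^{\sss,-n}_{\theta^n\w}$ to both sides and solve for $\phi^\sss_\w(\xi)$ to obtain
\begin{equation*}
    \phi^\sss_\w(\xi)
    = \Phi^{\sss,-n}_{\theta^n\w}\, \phi^\sss_{\theta^n\w}(h_{n,\w}(\xi))
      + \sum_{k=n}^{-1} \Phi^{\sss,-k-1}_{\theta^{k+1}\w}\, f_{\theta^k\w}\!\left(h_{k,\w}(\xi), \phi_{\theta^k\w}(h_{k,\w}(\xi))\right).
\end{equation*}
The boundary term is then estimated, via~\ref{eq:T2}, \eqref{ine:||phi_w(xi)||<=N||xi||} and \eqref{ine:||h_n,w(xi)||<=M...}, by $MN\,\alpha^\sss_{-n,\theta^n\w}\,\alpha^\cc_{n,\w}\,\|\xi\|$, which vanishes as $n\to-\infty$ by the hypothesis on $\alpha^\sss$ and $\alpha^\cc$, producing \eqref{eq:phi_n:discrete_ss}. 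The case $j=\uu$ is symmetric: take the $n\ge 1$ branch of \eqref{eq:dyn-split3b:discrete}, apply $\Phi^{\uu,-n}_{\theta^n\w}$, and let $n\to+\infty$, using~\ref{eq:T3} and the companion decay assumption to kill the boundary term.

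For the reverse implication $b)\Rightarrow a)$, I would insert \eqref{eq:phi_n:discrete_ss} and \eqref{eq:phi_n:discrete_uu} into the right-hand side of \eqref{eq:dyn-split3b:discrete}. The crucial ingredient is the cocycle property of the central component, $h_{k+n,\w}(\xi)=h_{k,\theta^n\w}(h_{n,\w}(\xi))$, which, as in the proof of Lemma~\ref{lemma:equiv}, follows from uniqueness of the defining recurrence for $h$. Writing the defining series for $\phi^\sss_{\theta^n\w}(h_{n,\w}(\xi))$ and performing the index shift $j=k+n$ produces $\sum_{j=-\infty}^{n-1}\Phi^{\sss,n-j-1}_{\theta^{j+1}\w}\,f_{\theta^j\w}(h_{j,\w}(\xi),\phi_{\theta^j\w}(h_{j,\w}(\xi)))$, which splits at $j=0$ into the linear term $\Phi^{\sss,n}_\w \phi_\w(\xi)$ (obtained by pulling $\Phi^{\sss,n}_\w$ through the series for $\phi^\sss_\w(\xi)$) plus the finite sum from $j=0$ to $n-1$. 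The $n\le -1$ case and the case $j=\uu$ follow by the same index-shift argument.

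The main obstacle will be bookkeeping: the sums carry $\theta^{k+1}\w$ rather than $\theta^k\w$, so consistent use of the correct form of the cocycle identity $\Phi^{j,n}_\w\,\Phi^{j,-k-1}_{\theta^{k+1}\w}=\Phi^{j,n-k-1}_{\theta^{k+1}\w}$ is essential, and the two semi-infinite branches ($n\ge 1$ and $n\le -1$) must each be verified. Beyond this, the argument is a faithful discrete translation of the continuous one, with telescoping sums replacing the integrals and~\ref{eq:T1}--\ref{eq:T3} playing exactly the same role they did there.
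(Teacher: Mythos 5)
Your proposal is correct and is exactly the adaptation the paper intends: the paper omits the proof of Lemma~\ref{lemma:equiv:discrete}, presenting it as the discrete analogue of Lemma~\ref{lemma:equiv}, and your argument (domination of the series by $M(1+N)\tau_\w^{\mp}\|\xi\|$, applying $\Phi^{j,-n}_{\theta^n\w}$ and killing the boundary term via the decay hypothesis of Theorem~\ref{thm:global:disc}, and the index shift using $h_{k+n,\w}(\xi)=h_{k,\theta^n\w}(h_{n,\w}(\xi))$ for the converse) is precisely that translation, with the cocycle identity $\Phi^{j,n}_\w\,\Phi^{j,-k-1}_{\theta^{k+1}\w}=\Phi^{j,n-k-1}_{\theta^{k+1}\w}$ handled correctly. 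The only cosmetic flaw is reusing $j$ both for the superscript in $\{\sss,\uu\}$ and as the shifted summation index.
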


Consider here the operator \( \JJ \), which assigns each pair \( (h, \phi) \in \cX^{(B)}_{M,N} \) to the map \( \JJ(h, \phi) \colon \cH \to X \) given by
 \begin{align*}
   \prtsr{\JJ\prts{h,\phi}}(n,\w,\xi)&=\begin{cases}
   \Phi_{\w}^{\cc,n} \xi + \dsum_{k=0}^{n-1} \Phi_{\theta^{k+1}\w}^{\cc,n-k-1} f_{\theta^k\w}(h_{k,\w}(\xi),\varphi_{\theta^k\w}(h_{k,\w}(\xi)))&\,\text{ if } n\geq 1 \\[5mm]
\Phi_{\w}^{\cc,n} \xi - \dsum_{k=n}^{-1} \Phi_{\theta^{k+1}\w}^{\cc,n-k-1} f_{\theta^k\w}(h_{k,\w}(\xi),\varphi_{\theta^k\w}(h_{k,\w}(\xi)))&\,\text{ if } n\leq -1
   \end{cases}
 \end{align*}
and $\LL$ be the operator that assigns to each pair $(h,\phi)\in\cX_{M,N}$ the map $\LL(h,\phi)\colon \cG \to X$ defined by
	\[
	\prtsr{\LL\prts{h,\phi}}(\w,\xi)=\prtsr{\LL^\sss\prts{h,\phi}}(\w,\xi)+\prtsr{\LL^\uu\prts{h,\phi}}(\w,\xi),
	\]
where
\begin{align*}
   \prtsr{\LL^\sss\prts{h,\phi}}(\w,\xi)
   & = \sum_{k=-\infty}^{-1} \Phi_{\theta^{k+1}\w}^{\sss,-(k+1)} f_{\theta^{k}\w}(h_{k,\w}(\xi),\phi_{\theta^{k}\w}(h_{k,\w}(\xi)))
\end{align*}
and
\begin{align*}
   \prtsr{\LL^\uu\prts{h,\phi}}(\w,\xi)
   & = -\sum_{k=0}^{+\infty} \Phi_{\theta^{k+1}\w}^{\uu,-(k+1)} f_{\theta^{k}\w}(h_{k,\w}(\xi),\phi_{\theta^{k}\w}(h_{k,\w}(\xi))).
\end{align*}

To finalize, define $\TT \colon \cX_{M,N}  \to \cX_{M,N} $ by
   $$ \TT(h,\phi) = \prts{\JJ(h,\phi), \LL(h,\phi)}.$$
The operator $\TT$ is a contraction in $(\cX_{M,N},d)$. By the Banach Fixed Point Theorem, $\TT$ as a unique fixed point $(h,\phi)$, which satisfies conditions~\eqref{eq:dyn-split3a:discrete}, \eqref{eq:phi_n:discrete_ss} and~\eqref{eq:phi_n:discrete_uu}. By Lemma~\ref{lemma:equiv:discrete} the pair $(h,\phi)$ also satisfy the conditions in~\eqref{eq:dyn-split3b:discrete}. Hence, by~\eqref{eq:dyn-split2a:discrete} and~\eqref{eq:dyn-split2b:discrete}, we get that $\prts{h_{n,\w}(\xi), \phi_{\theta^n\w}(h_{n,\w}(\xi))}$ is the orbit by $\Psi$ of the initial condition $$(\xi,\phi_\w^\sss(\xi),\phi_\w^\uu(\xi)) \in E_\w^\cc \x E_\w^\sss \x E_\w^\uu.$$ The graphs $\cV_{\phi,\w}$ are the required invariant manifolds of $\Psi$. Furthermore, for all $\w\in \W$, all $n\in\Z$ and  all $\xi,\xi'\in E_\w^\cc$ it follows from~\eqref{eq:||phi_w(xi)-pi_w(bxi)||<=...}, \eqref{ine:||h_n,w(xi)-h_n,w(bxi)||<=M...} and~\eqref{def:M+N} that
   \begin{equation*}
      \|\Psi_{\w}^n(\xi,\phi_\w(\xi)) - \Psi_{\w}^n(\xi',\phi_\w(\xi'))\|
         \le \frac{N}{\tau} \alpha^\cc_{n,\w} \|\xi-\bar \xi\|,
   \end{equation*}
which finishes the proof of  Theorem~\ref{thm:global:disc}.

\section{Continuous-time examples}\label{section:examples:cont}

For this section assume $\T=\R$. Throughout this entire section we consider a constant $\delta \in\, ]0,1/6[$ and a random variable $G \colon \W \to \, ]0,+\infty[$ satisfying
\begin{equation*}
   \int_{-\infty}^{+\infty} G(\theta^r\w)\,dr \le 1
   \ \ \ \text{ for all } \w \in \W.
\end{equation*}
 In all the following examples we may consider different growth rates along the \textit{central directions} $E_\w^\cc$, depending if we are looking to the \textit{future} ($t\to+\infty$) or to the \textit{past} ($t\to-\infty$).

\subsection{Tempered exponential trichotomies}
   Let
   \begin{equation*}
      \lbd^\cs, \lbd^\cu, \lbd^\sss, \lbd^\uu \colon \W \to \R
   \end{equation*}
   be $\theta$-invariant random variables, i.e. satisfying $\lbd^\ell(\theta^t \w) = \lbd^\ell(\w)$  for all $\w\in\W$, $t\in\R$ and $\ell\in\set{\cs, \cu, \sss, \uu}$.
   A Bochner measurable linear RDS $\Phi$ exhibits an \emph{exponential trichotomy} if it exhibits a generalized trichotomy with bounds
   \begin{align*}
      &    
         \alpha^\cc_{t,\w} =
         \begin{cases}
            K(\w) \e^{\lbd^\cs(\w)t}, \,\, t\ge0\\
            K(\w) \e^{\lbd^\cu(\w)t}, \,\, t\le0
         \end{cases}\\
      &   
         \alpha^\sss_{t,\w} = \,\,\,\, K(\w) \e^{\lbd^\sss(\w)t}, \,\, t\ge0\\
      &   
         \alpha^\uu_{t,\w} = \,\,\,\, K(\w) \e^{\lbd^\uu(\w)t}, \,\, t \le 0
   \end{align*}
   for some random variable $K:\W \to [1,+\infty[$.  If the  random variable $K$ is \emph{tempered}, \emph{i.e.}, if
\begin{equation}\label{eq:tempered}
    %\lim_{t\to+\infty}\frac1t \log K(\theta^t\w)=0
    \Lambda_{K,\gamma,\w}:=\sup_{t \in \T} \prtsr{\e^{-\gamma |t|}  K(\theta^t w)} < +\infty
\end{equation}
for all $\gamma >0$ and all $\w \in \W$, we say that $\Phi$ exhibits an \emph{tempered exponential trichotomy}. 
   
\begin{corollary}
   \label{corollary:tempered:cont}
    Let $\Phi$ be a Bochner measurable linear RDS exhibiting a tempered exponential trichotomy such that
   \begin{equation*}
      \lbd^\cu(\w)>\lbd^\sss(\w) 
      \qquad\text{ and }\qquad
      \lbd^\cs(\w)<\lbd^\uu(\w) 
   \end{equation*}
   for all $\w \in \W$, and let $f \in \sF^{(B)}_\alpha$. Assume that $\Psi$ is a Bochner measurable RDS such that~\eqref{eq:u(t)=...} has  unique solution $\Psi(\cdot,\w,x)$ for every $(\w,x) \in \W\x X$. Consider a $\theta$-invariant random variable $\gamma(\w)>0$ satisfying
   \begin{equation*}
       a(\w):=\lbd^\cu(\w)-\lbd^\sss(\w)-\gamma(\w)>0
       \quad\text{ and } \quad 
       b(\w):=\lbd^\uu(\w)-\lbd^\cs(\w)-\gamma(\w)>0
   \end{equation*}
   for all $\w \in \W$. If
   %\small
   \begin{equation*}
       \Lip(f_\w)\le \frac{\delta}{K(\w)}
         \mins{G(\w),
         \frac{a(\w)}{\Lambda_{K,\gamma(\w),\w}},
         \frac{b(\w)}{\Lambda_{K,\gamma(\w),\w}}}
   \end{equation*}
   %\normalsize
   for all $\w \in \W$, then the same conclusions of Theorem~\ref{thm:global} hold.
\end{corollary}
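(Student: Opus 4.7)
The plan is to verify the hypotheses of Theorem~\ref{thm:global} directly. Since $f\in\sF^{(B)}_\alpha$ and existence/uniqueness of $\Psi$ are already assumed, only the two limits in~\eqref{eq:CondicaoTeo} and the quantitative estimate $\sgm+\tau<1/2$ require verification. The rest is a substitution argument that exploits the $\theta$-invariance of the exponents $\lbd^\cs,\lbd^\cu,\lbd^\sss,\lbd^\uu$, together with the tempered property to tame $K(\theta^r\w)$.

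To handle the limits I would substitute the explicit exponential expressions. For $t\le 0$ the product becomes $\alpha^\sss_{-t,\theta^t\w}\alpha^\cc_{t,\w}=K(\theta^t\w)K(\w)\e^{(\lbd^\cu(\w)-\lbd^\sss(\w))t}$, and feeding in the tempered bound $K(\theta^t\w)\le \Lambda_{K,\gamma(\w),\w}\e^{\gamma(\w)|t|}$ together with $|t|=-t$ yields an upper bound proportional to $\e^{a(\w)t}$, which vanishes as $t\to-\infty$ by $a(\w)>0$. The $t\to+\infty$ limit is handled symmetrically using $b(\w)>0$.

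The bound $\sgm\le\delta$ comes from the key telescoping observation that $\theta$-invariance of $\lbd^\cs$ (resp.\ $\lbd^\cu$) gives $\alpha^\cc_{t-r,\theta^r\w}\alpha^\cc_{r,\w}=K(\theta^r\w)\alpha^\cc_{t,\w}$ for $0\le r\le t$ (resp.\ $t\le r\le 0$). Dividing by $\alpha^\cc_{t,\w}$ leaves $K(\theta^r\w)\Lip(f_{\theta^r\w})$, and $\Lip(f_{\theta^r\w})\le \delta G(\theta^r\w)/K(\theta^r\w)$ collapses this pointwise to $\delta G(\theta^r\w)$; the standing normalization $\int_\R G(\theta^r\w)\,dr\le 1$ then yields $\sgm\le\delta$.

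For $\tau$ I would invoke the $a(\w)$ and $b(\w)$ clauses of the Lipschitz hypothesis. The main obstacle, the one step that needs care, is a shift inequality for the tempered constant, namely
\[
\Lambda_{K,\gamma(\w),\theta^r\w}\ge \e^{-\gamma(\w)|r|}\Lambda_{K,\gamma(\w),\w}\ge \e^{-\gamma(\w)|r|}K(\w),
\]
which follows from the change of variable $u=s+r$ inside the supremum defining $\Lambda$, together with $|u-r|\le |u|+|r|$. With this in hand, the $K(\theta^r\w)$ factors from $\alpha^\sss_{-r,\theta^r\w}$ (respectively $\alpha^\uu_{-r,\theta^r\w}$) and $\Lip(f_{\theta^r\w})$ cancel, and the stable integrand is dominated by $\delta a(\w)\e^{a(\w)r}$ on $(-\infty,0]$, which integrates to $\delta$; the unstable piece is dominated by $\delta b(\w)\e^{-b(\w)r}$ on $[0,+\infty)$ and likewise integrates to $\delta$. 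Therefore $\tau\le 2\delta$ and $\sgm+\tau\le 3\delta<1/2$ because $\delta<1/6$, so Theorem~\ref{thm:global} applies and yields the stated conclusion.
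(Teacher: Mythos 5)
Your proposal is correct and follows essentially the same route as the paper: verify~\eqref{eq:CondicaoTeo} via the tempered bound on $K(\theta^t\w)$, get $\sgm\le\delta$ from the cancellation $\alpha^\cc_{t-r,\theta^r\w}\alpha^\cc_{r,\w}=K(\theta^r\w)\alpha^\cc_{t,\w}$ together with the $G$ clause, and get $\tau\le 2\delta$ from the inequality $K(\w)\le\e^{\gamma(\w)|r|}\Lambda_{K,\gamma(\w),\theta^r\w}$ (which the paper obtains by evaluating the supremum defining $\Lambda_{K,\gamma(\w),\theta^r\w}$ at $t=-r$, rather than via your triangle-inequality shift, but the result is identical), so that $\sgm+\tau\le 3\delta<1/2$.
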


\begin{proof}
   Since $K$ is a tempered random variable, we have
   \begin{equation*}
   \begin{split}
       \lim_{t \to -\infty} \alpha^\sss_{-t,\theta^t \w} \alpha^\cc_{t,\w}
       &= \lim_{t \to -\infty} K(\w) K(\theta^t \w) \e^{(\lbd^\cu(\w) - \lbd^\sss(\w))t}\\
		&\leq  \lim_{t \to -\infty} K(\w) \Lambda_{K,a(\w),\w} \e^{\gamma(\w)t}
       = 0
       \end{split}
   \end{equation*}
   and
   \begin{equation*}
   \begin{split}
       \lim_{t \to +\infty} \alpha^\uu_{-t,\theta^t \w} \alpha^\cc_{t,\w}
       &= \lim_{t \to +\infty} K(\w) K(\theta^t \w) \e^{(\lbd^\cs(\w) - \lbd^\uu(\w))t}\\
      &\leq  \lim_{t \to +\infty} K(\w) \Lambda_{K,b(\w),\w} \e^{\gamma(\w)t}
       = 0
       \end{split}
   \end{equation*}
   for all $\w \in \W$. Therefore condition~\eqref{eq:CondicaoTeo} holds. Let us check ~\eqref{eq:CondicaoTeo:alpha+tau<1/2}. Indeed, for every $t \ge 0$ and every $\w \in \W$ we have
   \begin{align*}
         \dfrac{1}{\alpha^\cc_{t,\w}} \int_{0}^{t} \alpha^\cc_{t-r,\theta^{r}\w}
            \Lip(f_{\theta^r\w}) \alpha^\cc_{r,\w}\dr
      & 
         = \int_{0}^{t} K(\theta^{r} \w) \Lip(f_{\theta^r \w})\dr\\
      & 
         \le \delta \int_{-\infty}^{+\infty} G(\theta^r\w)\dr\\
      & 
         \le \delta,
   \end{align*}
   and, similarly, for every $t \le 0$ and every $\w \in \W$ we have
\begin{equation*}
         \dfrac{1}{\alpha^\cc_{t,\w}} \int_{t}^{0} \alpha^\cc_{t-r,\theta^{r}\w}
            \Lip(f_{\theta^r\w}) \alpha^\cc_{r,\w}\dr
      \le \delta.
         \end{equation*}
Thus, $\sigma\leq\delta$.   Moreover, since $K(\w)\leq \e^{\gamma(\w) \abs{r}} \Lambda_{K,\gamma(\w),\theta^r\w}$ for every $\w\in\W$ and $r\in\R$, we have
   \begin{align*}      
         \dint_{-\infty}^{0} \alpha^\sss_{-r,\theta^r\w} \Lip(f_{\theta^r\w})
            \alpha^\cc_{r,\w} \dr
      & 
         = \dint_{-\infty}^{0} K(\w) K(\theta^r\w) \e^{(\lbd^\cu(\w) - \lbd^\sss(\w))r}
            \Lip(f_{\theta^r\w}) \dr\\
      & 
         \le \delta \dint_{-\infty}^{0}  
           a(\w)
            \e^{a(\w)r} \dr\\
      & 
         \le \delta.
   \end{align*}
   and
   \begin{align*}
         \dint_{0}^{+\infty} \alpha^\uu_{-r,\theta^r\w} \Lip(f_{\theta^r\w})
            \alpha^\cc_{r,\w} \dr
      &
         = \dint_{0}^{+\infty} K(\w) K(\theta^r\w) \e^{(\lbd^\cs(\w) - \lbd^\uu(\w))r}
            \Lip(f_{\theta^r\w}) \dr\\
      &
         \le \delta \dint_{0}^{+\infty}
            b(\w)
            \e^{-b(\w)r} \dr\\
      &
         \le \delta.
   \end{align*} 
 Henceforth, $\sigma+\tau \le 3\delta < 1/2$.
\end{proof}

\subsection{$\psi$-trichotomies}

   Consider measurable functions
      $$ \psi^{\cs},  \psi^{\cu}, \psi^{\sss},   \psi^{\uu}\colon \T \x \W \to ]0,+\infty[$$
   such that for $\ell\in\{\cs, \cu, \sss, \uu\}$ we have
   \begin{equation}\label{eq:psi:cociclo}
      \psi^\ell(t+s,\w) = \psi^\ell(t,\theta^s \w) \psi^\ell (s,\w)
   \end{equation}
   for all $t,s \in \T$ and all $\w \in \W$.
     A \emph{$\psi$-trichotomy} is a generalized trichotomy with bounds
   \begin{equation*}%\label{eq:bounds:trich:psi:continuous}
   \begin{split}
      &    
         \alpha^\cc_{t,\w} =
         \begin{cases}
            K(\w) \psi^\cs(t,\w), \,\, t\ge0\\
            K(\w) \psi^\cu(t,\w), \,\, t\le0
         \end{cases}\\
      &   
         \alpha^\sss_{t,\w} = \,\,\,\, K(\w) \psi^\sss(t,\w), \,\, t\ge0\\
      &   
         \alpha^\uu_{t,\w} = \,\,\,\, K(\w) \psi^\uu(t,\w), \,\, t \le 0   \end{split}
   \end{equation*}
      for a random variable $K\colon\W\to[1,+\infty[$. 
      For all $\ell\in\{\cu,\cs,\uu,\sss\}$ set
   \begin{equation}\label{eq:derivative:psi}
d_{\psi^\ell}(\w)=\lim_{h\to0}\dfrac{\psi^\ell(h,\w)-1}{h}.
   \end{equation}
Since $\psi^\ell(0,\w)=1$, from~\eqref{eq:psi:cociclo} we have
\[
\frac{d}{dt}\psi^\ell(t,\w)=d_{\psi^\ell}(\theta^t\w)\psi^\ell(t,\w)
\]
whenever limits~\eqref{eq:derivative:psi} exist.
Moreover, in this situation we also have
\[
\frac{d}{dt}\psi^\ell(-t,\theta^t\w)=\frac{d}{dt}\frac{1}{\psi^\ell(t,\w)}=-d_{\psi^\ell}(\theta^t\w)\psi^\ell(-t,\theta^t\w).
\]

 From now on we also assume that for all $\w\in\W$ the following limit exists:
   \begin{equation}\label{eq:derivative:K}
d_K(\w)=\lim_{h\to0}\dfrac{K(\theta^{h}\w)-K(\w)}{h}.
   \end{equation}
We notice that for all $t\in\R$, $\frac{d}{dt}K(\theta^t\w)=d_K(\theta^t\w)$.

\begin{corollary}\label{corollary:psi:cont}   Let $\Phi$ be a Bochner measurable linear RDS exhibiting a $\psi$-trichotomy such that the limits in~\eqref{eq:derivative:psi} and~\eqref{eq:derivative:K} exist and satisfy
   \[    d_{\psi^\cs}(\w)-d_{\psi^\uu}(\w)<  \dfrac{d_K(\w)}{K(\w)} < d_{\psi^\cu}(\w)-d_{\psi^\sss}(\w)
   \]
   for all $\w \in \W$. Let $f \in \sF^{(B)}_\alpha$ be such that
   \begin{equation*}
      \Lip(f_\w)
      \le \dfrac{\delta}{K(\w)}
         \mins{G(\w), \dfrac{a(\w)}{K(\w)}, \dfrac{b(\w)}{K(\w)}}
   \end{equation*}
   for all $\w \in \W$, where
 \[
   a(\w)=\dfrac{d_K(\w)}{K(\w)} -d_{\psi^\cs}(\w)+d_{\psi^\uu}(\w)\,\text{ and }\, b(\w)=-\dfrac{d_K(\w)}{K(\w)} +d_{\psi^\cu}(\w)-d_{\psi^\sss}(\w).
   \]
Assume that $\Psi$ is a Bochner measurable RDS such that~\eqref{eq:u(t)=...} has unique solution $\Psi(\cdot,\w,x)$ for every $\w \in \W$ and every $x \in X$. If, for all $\w \in \W$,
   \begin{equation}\label{eq:corol:psi:limits}
          \dlim_{t \to -\infty} K(\theta^t \w) \psi^\sss(-t,\theta^t\w)\psi^\cu(t,\w) =\dlim_{t \to +\infty} K(\theta^t \w) \psi^\uu(-t,\theta^t\w)\psi^\cs(t,\w) = 0
      \end{equation}
   then the same conclusions of Theorem~\ref{thm:global} hold.
\end{corollary}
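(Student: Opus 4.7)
The plan is to reduce the corollary to Theorem~\ref{thm:global} by verifying that the limit condition~\eqref{eq:CondicaoTeo} and the smallness inequality $\sigma+\tau<1/2$ both hold. Condition~\eqref{eq:CondicaoTeo} is essentially immediate from~\eqref{eq:corol:psi:limits}: for $t\le 0$ one has $\alpha^\sss_{-t,\theta^t\w}\alpha^\cc_{t,\w} = K(\w)\cdot K(\theta^t\w)\psi^\sss(-t,\theta^t\w)\psi^\cu(t,\w)$ and the factor $K(\w)$ does not depend on $t$; the second limit in~\eqref{eq:CondicaoTeo} is handled symmetrically.

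For $\sigma$ the cocycle property~\eqref{eq:psi:cociclo} makes the $\psi$-factors cancel. Explicitly, for $t\ge 0$ and $r\in[0,t]$ one has $\psi^\cs(t-r,\theta^r\w)\psi^\cs(r,\w)=\psi^\cs(t,\w)$, hence
\[
   \frac{1}{\alpha^\cc_{t,\w}}\int_0^t \alpha^\cc_{t-r,\theta^r\w}\Lip(f_{\theta^r\w})\alpha^\cc_{r,\w}\,dr
   = \int_0^t K(\theta^r\w)\,\Lip(f_{\theta^r\w})\,dr
   \le \delta\int_0^t G(\theta^r\w)\,dr \le \delta,
\]
using the $G$-part of the hypothesis and the global bound on $G$ assumed at the start of the section. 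The $t\le 0$ case is identical with $\psi^\cu$ replacing $\psi^\cs$, so $\sigma\le\delta$.

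The real work lies in $\tau$. The idea is to set
\[
   H(r) = \frac{K(\w)}{K(\theta^r\w)}\,\psi^\sss(-r,\theta^r\w)\,\psi^\cu(r,\w)
\]
and observe that the product rule, together with the derivative identities recorded just before the statement of the corollary, gives
\[
   H'(r) = H(r)\cdot b(\theta^r\w), \qquad H(0)=1,
\]
since $-d_K(\w)/K(\w)-d_{\psi^\sss}(\w)+d_{\psi^\cu}(\w)=b(\w)$. Applying the $b$-bound $\Lip(f_\w)\le\delta b(\w)/K(\w)^2$ turns the $\tau^-$-integrand into $\delta H'(r)$, and therefore
\[
   \int_{-\infty}^0 \alpha^\sss_{-r,\theta^r\w}\Lip(f_{\theta^r\w})\alpha^\cc_{r,\w}\,dr
   \le \delta\int_{-\infty}^0 H'(r)\,dr
   = \delta\bigl(H(0)-H(-\infty)\bigr) \le \delta,
\]
because $H>0$ and (since $b>0$) $H$ is monotone on $(-\infty,0]$, so $H(-\infty)$ exists and is nonnegative. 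A mirror-image calculation with $\tilde H(r)=K(\w)\psi^\uu(-r,\theta^r\w)\psi^\cs(r,\w)/K(\theta^r\w)$ yields $\tilde H'(r)=-\tilde H(r)\cdot a(\theta^r\w)$ with $\tilde H(0)=1$, so that the $\tau^+$-integral is bounded by $\delta$ in the same way. Summing everything gives $\sigma+\tau\le 3\delta<1/2$ because $\delta<1/6$, and Theorem~\ref{thm:global} applies.

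The main obstacle is recognizing that the weights in the $\tau^\pm$-integrals are themselves exponentials of primitives of $b(\theta^r\w)$ and $a(\theta^r\w)$, so that after inserting the Lipschitz bound the integrand becomes an exact derivative. The precise shape of the constants $a(\w), b(\w)$ given in the statement is tailored exactly for this cancellation to happen; everything else is bookkeeping.
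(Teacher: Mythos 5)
Your proposal is correct and follows essentially the same route as the paper: cocycle cancellation plus the $G$-bound gives $\sigma\le\delta$, and the key observation that the constants $a(\w),b(\w)$ turn the $\tau^\pm$-integrands into exact derivatives of $\psi^\uu(-r,\theta^r\w)\psi^\cs(r,\w)/K(\theta^r\w)$ and $\psi^\sss(-r,\theta^r\w)\psi^\cu(r,\w)/K(\theta^r\w)$ is precisely the paper's computation. The only cosmetic difference is that you bound the boundary term at infinity by monotonicity and positivity of $H$, whereas the paper evaluates it as zero using the limit hypothesis; both give $\tau\le 2\delta$ and hence $\sigma+\tau\le 3\delta<1/2$.
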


\begin{proof}
  Conditions in~\eqref{eq:corol:psi:limits} are equivalent to those in~\eqref{eq:CondicaoTeo}, and, as in the proof of Corollary~\ref{corollary:tempered:cont} we have
  $\sgm \le \delta$. Moreover, since
   \begin{align*}
      &
         \frac{d}{dt}
         \prts{\dfrac{\psi^\uu(-t,\theta^t\w)\psi^\cs(t,\w)}
         {K(\theta^{t}\w)}}\\
      &
         = \dfrac{\prts{-d_{\psi^\uu}(\theta^t\w)+d_{\psi^\cs}(t,\w)}K(\theta^t\w)-d_K(\theta^t\w)}
            {[K(\theta^t\w)]^2}\,\psi^\uu(-t,\theta^t\w)\,\psi^\cs(t,\w)\\
            &
         =-\dfrac{a(\theta^t\w)}{K(\theta^t\w)}\,\psi^\uu(-t,\theta^t\w)\,\psi^\cs(t,\w)
         ,
   \end{align*}
   we have
   \begin{align*}
      & \dint_{0}^{+\infty}
         \alpha^\uu_{-r,\theta^r\w} \Lip(f_{\theta^r\w}) \alpha^\cs_{r,\w} \dr\\
      & = K(\w) \dint_{0}^{+\infty}
         K(\theta^r\w)\psi^\uu(-r,\theta^r \w) \Lip(f_{\theta^r\w})\psi^\cs(r,\w) \dr
         \\
      & \le \delta K(\w)\dint_{0}^{+\infty}
         \dfrac{a(\theta^r\w)}{K(\theta^r\w)}\,\psi^\uu(-r,\theta^r\w)\,\psi^\cs(r,\w)\,dr\\
      & = \delta
         - \delta K(\w)\lim_{r \to +\infty}
         \dfrac{\psi^\uu(-r,\theta^r\w)\psi^\cs(r,\w)
         }{K(\theta^{r}\w)}\\
            & = \delta.
   \end{align*}
   Similarly, since
   \begin{align*}
      &
         \frac{d}{dt}
         \prts{\dfrac{\psi^\sss(-t,\theta^t\w)\psi^\cu(t,\w)}
         {K(\theta^{t}\w)}}\\
      &
         = \dfrac{\prts{-d_{\psi^\sss}(\theta^t\w)+d_{\psi^\cu}(t,\w)}K(\theta^t\w)-d_K(\theta^t\w)}
            {[K(\theta^t\w)]^2}\,\psi^\sss(-t,\theta^t\w)\,\psi^\cu(t,\w)\\
            &
         =\dfrac{b(\theta^t\w)}{K(\theta^t\w)}\,\psi^\sss(-t,\theta^t\w)\,\psi^\cu(t,\w)
         ,
   \end{align*}
   we have
   \begin{align*}
      & \dint_{-\infty}^{0}
         \alpha^\sss_{-r,\theta^r\w} \Lip(f_{\theta^r\w}) \alpha^\cu_{r,\w} \dr\\
      & = K(\w) \dint_{-\infty}^{0}
         K(\theta^r\w)\psi^\sss(-r,\theta^r \w) \Lip(f_{\theta^r\w})\psi^\cu(r,\w) \dr
         \\
      & \le \delta K(\w)\dint_{-\infty}^{0}
         \dfrac{b(\theta^r\w)}{K(\theta^r\w)}\,\psi^\sss(-r,\theta^r\w)\,\psi^\cu(r,\w)\,dr\\
      & = \delta
         - \delta K(\w)\lim_{r \to -\infty}
        \dfrac{\psi^\sss(-r,\theta^r\w)\psi^\cu(r,\w)
         }{K(\theta^{r}\w)}\\
            & = \delta.
   \end{align*}
   
       Thus $\sigma+\tau \le 3\delta < 1/2$.
\end{proof}

In the following we provide a particular example of a $\psi$-trichotomy in $\R^4$.
      
\begin{example} Let $\psi^{\cs},  \psi^{\cu}, \psi^{\sss},   \psi^{\uu}\colon \T \x \W \to ]0,+\infty[$ be measurable functions and let $K \colon \W \to [1,+\infty[$ be a random variable. In $X=\R^4$, equipped with the maximum norm, consider the  projections
   \begin{align*}
      P_\w^\cs(x_1,x_2,x_3,x_4)&=(0,0,x_3+(K(\w)-1)x_4, 0)\\
      P_\w^\cu(x_1,x_2,x_3,x_4)&=(
      (1-K(\w))x_2,x_2,0,0)\\
      P_\w^\sss(x_1,x_2,x_3,x_4)&=(x_1+(K(\w)-1)x_2,0, 0,0)\\
      P_\w^\uu(x_1,x_2,x_3,x_4)&=(0,0,(1-K(\w))x_4,x_4)
   \end{align*}
For all $\w',\w\in\W$,
   \begin{align*}
    P_{\w'}^\cs P_\w^\uu &= (0,0,(K(\w')-K(\w))x_4,0)\\
        P_{\w'}^\sss P_\w^\cu &= ((K(\w')-K(\w))x_2,0,0,0)
     \end{align*}
and for all the remaining $i,j\in\{{\cs}, \cu, {\sss}, {\uu}\}$, with $i \neq j$,
   \begin{equation*}
      P_{\w'}^i P_\w^j =0.
    \end{equation*}
Notice that for all $\w, {\w'} \in \W$
   \begin{equation*}
      P_{\w'}^\sss  P_\w^\sss=P_\w^\sss, \,\,\,
      P_{\w'}^\uu  P_\w^\uu=P_{\w'}^\uu, \,\,\,
      P_{\w'}^\cs  P_\w^\cs=P_\w^\cs\,\,\,
      \text{ and }\,\,\,
      P_{\w'}^\cu  P_\w^\cu=P_{\w'}^\cs.
   \end{equation*}
Moreover,
\begin{equation*}
\|P_\w^\cs\|=\|P_\w^\sss\|=K(\w)
\end{equation*}
and
\begin{equation*}
\|P_\w^\cu\|=\|P_\w^\uu\|=\max\set{K(\w)-1,1}\leq K(\w).
\end{equation*}

We define $\Phi\colon\T\x\W\x \R^4\to\R^4$  by
   \begin{equation*}
      \Phi_\w^t
      = \psi^\cs(t,\w) \, P_\w^\cs
         + \dfrac{K(\w)}{K(\theta^t \w)} {\psi^{\cu}(t,\w)} \, P_{\theta^t \w}^\cu
         +\psi^\sss(t,\w)P_\w^\sss+ \dfrac{K(\w)}{K(\theta^t \w)} {\psi^{\uu}(t,\w)} \, P_{\theta^t \w}^\uu.
   \end{equation*}
  Let $P^\cc=P^\cs+P^\cu$ and $\mathcal P=(P^\cc,P^\sss,P^\uu)$. We have that $\Phi$ is a measurable linear RDS over $\Sigma$ that admits a measurable $\mathcal P$-invariant splitting, and
   \begin{align*}
     \|\Phi^{\cc,t}_\w \| &= \max\set{\psi^\cs(t,\w)\|P_\w^\cs\|, \frac{1}{K(\theta^t\w)}\psi^\cu(t,\w)\|P_{\theta^t\w}^\cu\|}\\
     &\leq  K(\w) \max\set{\psi^\cs(t,\w),\psi^\cu(t,\w)}\\
     \|\Phi^{\sss,t}_\w \| &= \psi^\sss(t,\w)\|P_\w^\sss\|= K(\w) \psi^\sss(t,\w)\\
      \|\Phi^{\uu,t}_{\w} \| &= \dfrac{K(\w)}{K(\theta^t\w)}\psi^\uu(t,\w)\|P_{\theta^t\w}^\uu\|\leq K(\w) \psi^\uu(t,\w)
       \end{align*}
          Hence the linear RDS $\Phi$ exhibits a generalized trichotomy with bounds
        \begin{align*}
     \alpha^\cc_{t,\w}&=  K(\w)\max\set{ \psi^\cs(t,\w),\psi^\cu(t,\w) }\\
     \alpha^\sss_{t,\w}&= K(\w) \psi^\sss(t,\w)\\
      \alpha^\uu_{t,\w}&= K(\w) \psi^\uu(t,\w)
       \end{align*}
If we assume $\psi^\cs(t,\w)\ge\psi^\cu(t,\w)$ for all $t\ge 0$ then
\begin{equation*}
     \alpha^\cc_{t,\w}=
     \begin{cases}
     K(\w)\psi^\cs(t,\w)&\text{ if } t\ge0\\
     K(\w)\psi^\cu(t,\w)&\text{ if } t\le0
     \end{cases}
\end{equation*}
and $\Phi$ exhibits a $\psi$-trichotomy.
      \end{example}
      
In the next sections we consider particular $\psi$-trichotomies.

\subsubsection{Integral exponential trichotomy}
  \label{sec:ex:integral}
   Let
   \begin{equation*}
      \lbd^\cs, \lbd^\cu, \lbd^\sss, \lbd^\uu \colon \W \to \R
   \end{equation*}
   be random variables such that for all $\w\in\W$ and $\ell\in\set{\cs,\cu,\sss,\uu}$, the map $r \mapsto \lbd^\ell(\theta^r \w)$ is integrable in every interval $[0,t]$
An \emph{integral exponential trichotomy} is a $\psi$-trichotomy with
   \[
   \psi^\ell(t,\w)=\e^{\int_0^t\lbd^\ell(\theta^r\w)dr}
   \]
for all $\ell\in\set{\cs, \cu, \sss, \uu}$, i.e., is a generalized trichotomy with bounds
   \begin{equation*}%\label{eq:bounds:trich:integral:exponential:continuous}
   \begin{split}
      &    
         \alpha^\cc_{t,\w} =
         \begin{cases}
            K(\w) \e^{\int_0^t\lbd^\cs(\theta^r\w)dr}, \,\, t\ge0\\
            K(\w) \e^{\int_0^t\lbd^\cu(\theta^r\w)dr}, \,\, t\le0
         \end{cases}\\
      &   
         \alpha^\sss_{t,\w} = \,\,\,\, K(\w) \e^{\int_0^t\lbd^\sss(\theta^r\w)dr}, \,\, t\ge0\\
      &   
         \alpha^\uu_{t,\w} = \,\,\,\, K(\w) \e^{\int_0^t\lbd^\uu(\theta^r\w)dr}, \,\, t \le 0.   \end{split}
   \end{equation*}
Notice that  if
\begin{equation}\label{eq:lbd:integral}
\dlim_{h\to0}\frac1h\int_0^h\lbd^\ell(\theta^r\w)\,dr=\lbd^\ell(\w)
\end{equation}
then $d_{\psi^\ell}(\w)=\lbd^\ell(\w)$ for all $\ell\in\set{\cs, \cu, \sss, \uu}$. From Corollary~\ref{corollary:psi:cont} we get the following.

\begin{corollary}
     Let $\Phi$ be a Bochner measurable linear RDS exhibiting
     an integral exponential trichotomy such that \eqref{eq:lbd:integral} holds and the limit~\eqref{eq:derivative:K} exists and satisfyies
   \[    \lbd^\cs(\w)-\lbd^\uu(\w)<  \dfrac{d_K(\w)}{K(\w)} < \lbd^\cu(\w)-\lbd^\sss(\w)
   \]
   for all $\w \in \W$.
   Let $f \in \sF^{(B)}_\alpha$ be such that
   \begin{equation*}
      \Lip(f_\w)
      \le \dfrac{\delta}{K(\w)}
         \mins{G(\w), \dfrac{a(\w)}{K(\w)}, \dfrac{b(\w)}{K(\w)}}
   \end{equation*}
   for all $\w \in \W$, where
   \[
   a(\w)=\dfrac{d_K(\w)}{K(\w)} -\lbd^\cs(\w)+\lbd^\uu(\w)\,\text{ and }\,b(\w)=-\dfrac{d_K(\w)}{K(\w)} +\lbd^\cu(\w)-\lbd^\sss(\w).\]   
Assume that $\Psi$ is a Bochner measurable RDS such that~\eqref{eq:u(t)=...} has a unique solution $\Psi(\cdot,\w,x)$ for every $\w \in \W$ and every $x \in X$. If for all $\w \in \W$,
   \begin{equation*}%\label{eq:corol:exp_dich}
      \dlim_{t \to +\infty}
      K(\theta^t \w) \e^{\int_0^t \lbd^\cs(\theta^r \w)-\lbd^\uu(\theta^r \w) \dr}
      =
      \dlim_{t \to -\infty}
      K(\theta^t \w) \e^{\int_0^t \lbd^\cu(\theta^r \w)-\lbd^\sss(\theta^r \w)\dr}
      = 0
   \end{equation*}
   then the same conclusions of Theorem~\ref{thm:global} hold.
\end{corollary}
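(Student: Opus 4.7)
The plan is to deduce this corollary directly from Corollary~\ref{corollary:psi:cont} by verifying that the family
\[
\psi^\ell(t,\w)=\e^{\int_0^t\lbd^\ell(\theta^r\w)\,dr},\qquad \ell\in\set{\cs,\cu,\sss,\uu},
\]
fits into the $\psi$-trichotomy framework and that the hypotheses of Corollary~\ref{corollary:psi:cont} translate, term by term, to those in the statement.

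First, I would verify the cocycle property~\eqref{eq:psi:cociclo}. Splitting $\int_0^{t+s}=\int_0^s+\int_s^{t+s}$ and performing the change of variables $u=r-s$ in the second integral gives
\[
\int_s^{t+s}\lbd^\ell(\theta^r\w)\,dr=\int_0^t\lbd^\ell(\theta^{u+s}\w)\,du=\int_0^t\lbd^\ell(\theta^u(\theta^s\w))\,du,
\]
so $\psi^\ell(t+s,\w)=\psi^\ell(t,\theta^s\w)\psi^\ell(s,\w)$. Next, under assumption~\eqref{eq:lbd:integral}, I would compute the derivatives~\eqref{eq:derivative:psi}: since
\[
\dfrac{\psi^\ell(h,\w)-1}{h}=\dfrac{\e^{\int_0^h\lbd^\ell(\theta^r\w)\,dr}-1}{\int_0^h\lbd^\ell(\theta^r\w)\,dr}\cdot\dfrac{1}{h}\int_0^h\lbd^\ell(\theta^r\w)\,dr,
\]
and letting $h\to 0$, the first factor tends to $1$ while the second tends to $\lbd^\ell(\w)$, yielding $d_{\psi^\ell}(\w)=\lbd^\ell(\w)$ for every $\ell\in\set{\cs,\cu,\sss,\uu}$.

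With these identifications, the comparison inequality on $d_K/K$ in Corollary~\ref{corollary:psi:cont} becomes exactly the assumed inequality $\lbd^\cs-\lbd^\uu<d_K/K<\lbd^\cu-\lbd^\sss$, and the quantities $a(\w)$ and $b(\w)$ of Corollary~\ref{corollary:psi:cont} coincide with those in the statement. The Lipschitz bound on $\Lip(f_\w)$ is already stated in the form required. It then remains to translate the limit conditions~\eqref{eq:corol:psi:limits}. Using the cocycle property (or a direct substitution $u=r+t$ in the exponent) one gets
\[
\psi^\sss(-t,\theta^t\w)=\e^{-\int_0^t\lbd^\sss(\theta^r\w)\,dr},\qquad \psi^\uu(-t,\theta^t\w)=\e^{-\int_0^t\lbd^\uu(\theta^r\w)\,dr},
\]
so that
\[
K(\theta^t\w)\,\psi^\sss(-t,\theta^t\w)\,\psi^\cu(t,\w)=K(\theta^t\w)\,\e^{\int_0^t[\lbd^\cu(\theta^r\w)-\lbd^\sss(\theta^r\w)]\,dr},
\]
and analogously for the $\uu/\cs$ pair. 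The hypotheses on the limits as $t\to-\infty$ and $t\to+\infty$ in the corollary thus coincide with~\eqref{eq:corol:psi:limits}, and Corollary~\ref{corollary:psi:cont} applies.

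The only potentially delicate step I anticipate is the reduction $d_{\psi^\ell}(\w)=\lbd^\ell(\w)$: it requires assumption~\eqref{eq:lbd:integral} (a Lebesgue-type differentiation hypothesis), and one must be careful that the ratio $(\e^x-1)/x$ converges to $1$ even when $x=\int_0^h\lbd^\ell(\theta^r\w)\,dr$ does not have a sign or a pointwise derivative in the classical sense. Everything else is an exercise in substituting definitions into the already established Corollary~\ref{corollary:psi:cont}.
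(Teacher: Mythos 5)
Your proposal is correct and follows exactly the route the paper intends: the paper states this corollary as an immediate consequence of Corollary~\ref{corollary:psi:cont} after observing that \eqref{eq:lbd:integral} gives $d_{\psi^\ell}(\w)=\lbd^\ell(\w)$, and you supply precisely the verifications (cocycle property, identification of the derivatives, translation of the inequalities, the Lipschitz bound, and the limit conditions via $\psi^\ell(-t,\theta^t\w)=1/\psi^\ell(t,\w)$) that the paper leaves implicit. Your handling of the factor $(\e^x-1)/x$ with $x=\int_0^h\lbd^\ell(\theta^r\w)\,dr$ correctly isolates the only point requiring care.
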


\subsubsection{Non exponential trichotomies}

We provide now a particular type of $\psi$-trichotomies that can be easily handled to construct trichotomies beyhond the exponential bounds. Let
   \begin{equation*}
      \lbd^\cs, \lbd^\cu, \lbd^\sss, \lbd^\uu \colon \W \to \R
   \end{equation*}
   be random variables such that for all $\ell\in\set{\cs, \cu, \sss, \uu}$ the following limit exists for all $\w$:
   \begin{equation}\label{eq:derivative:nonexp}
   d_{\lambda^\ell}(\w):=\lim_{h\to 0}\frac{\lambda^\ell(\theta^h\w)-\lambda^\ell(\w)}{h}.
   \end{equation}
   Consider a $\psi$-trichotomy with
   \[
   \psi^\ell(t,\w)=\dfrac{\lbd^\ell(\w)}{\lbd^\ell(\theta^t \w)}
   \]
for all $\ell\in\set{\cs, \cu, \sss, \uu}$, i.e., is a generalized trichotomy with bounds
   \begin{equation}\label{eq:bounds:trich:nonexp:continuous}
   \begin{split}
      &    
         \alpha^\cc_{t,\w} =
         \begin{cases}
             K(\w) \dfrac{\lbd^\cs(\w)}{\lbd^\cs(\theta^t \w)}, \,\, t\ge0\\[4mm] 
            K(\w) \dfrac{\lbd^\cu(\w)}{\lbd^\cu(\theta^t \w)}, \,\, t\le0
         \end{cases}\\
      &   
         \alpha^\sss_{t,\w} = \,\,\,\, K(\w) \frac{\lbd^\sss(\w)}{\lbd^\sss(\theta^t \w)}, \,\, t\ge0\\
      &   
         \alpha^\uu_{t,\w} = \,\,\,\, K(\w) \frac{\lbd^\uu(\w)}{\lbd^\uu(\theta^t \w)}, \,\, t \le 0.   \end{split}
   \end{equation}
Notice that 
\begin{equation*}
   d_{\psi^\ell}(\w)=-\frac{d_{\lbd^\ell}(\w)}{\lambda^\ell(\w)}.
\end{equation*}
 for all $\ell\in\set{\cs, \cu, \sss, \uu}$. 
From Corollary~\ref{corollary:psi:cont} we get the following.

\begin{corollary}
   Let $\Phi$ be a Bochner measurable linear RDS exhibiting an $\alpha$-trichotomy, with bounds~\eqref{eq:bounds:trich:nonexp:continuous} and such that~\eqref{eq:derivative:nonexp} and~\eqref{eq:derivative:K} exist and satisfy
   \[
\frac{d_{\lbd^\uu}(\w)}{\lambda^\uu(\w)}-\frac{d_{\lbd^\cs}(\w)}{\lambda^\cs(\w)}< \dfrac{d_K(\w)}{K(\w)} <\frac{d_{\lbd^\sss}(\w)}{\lambda^\sss(\w)}-\frac{d_{\lbd^\cu}(\w)}{\lambda^\cu(\w)}.
   \]
    Let $f \in \sF^{(B)}_\alpha$ be such that for all $\w \in \W$ we have
   \begin{equation*}
      \Lip(f_\w)
      \le \dfrac{\delta}{K(\w)}
         \mins{G(\w), \dfrac{a(\w)}{K(\w)}, \dfrac{b(\w)}{K(\w)}},
   \end{equation*}
    where
   \[
   a(\w)=\dfrac{d_K(\w)}{K(\w)} +\frac{d_{\lbd^\cs}(\w)}{\lambda^\cs(\w)}-\frac{d_{\lbd^\uu}(\w)}{\lambda^\uu(\w)}
   \]
   and
   \[b(\w)=-\dfrac{d_K(\w)}{K(\w)}-\frac{d_{\lbd^\cu}(\w)}{\lambda^\cu(\w)} +\frac{d_{\lbd^\sss}(\w)}{\lambda^\sss(\w)}.
   \]
   Assume that $\Psi$ is a Bochner measurable RDS such that~\eqref{eq:u(t)=...} has a unique solution $\Psi(\cdot,\w,x)$ for every $\w \in \W$ and every $x \in X$. If for all $\w \in \W$,
   \begin{equation*}
         \dlim_{t \to +\infty}  K(\theta^t \w)\dfrac{\lbd^\sss(\theta^t \w)}{\lbd^\cu(\theta^t\w)}=\dlim_{t \to +\infty}K(\theta^t \w)\dfrac{\lbd^\uu(\theta^t \w)}{\lbd^\cs(\theta^t\w)} = 0
      \end{equation*}
   then the same conclusions of Theorem~\ref{thm:global} hold.
\end{corollary}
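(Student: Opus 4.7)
The plan is to deduce this corollary directly from Corollary~\ref{corollary:psi:cont} applied to the specific choice $\psi^\ell(t,\w) = \lbd^\ell(\w)/\lbd^\ell(\theta^t\w)$ for $\ell\in\set{\cs,\cu,\sss,\uu}$. First I would verify the cocycle identity~\eqref{eq:psi:cociclo}, which reduces to a telescoping computation:
\[
\psi^\ell(t,\theta^s\w)\,\psi^\ell(s,\w)
= \dfrac{\lbd^\ell(\theta^s\w)}{\lbd^\ell(\theta^{t+s}\w)}\cdot\dfrac{\lbd^\ell(\w)}{\lbd^\ell(\theta^s\w)}
= \dfrac{\lbd^\ell(\w)}{\lbd^\ell(\theta^{t+s}\w)}
= \psi^\ell(t+s,\w),
\]
together with $\psi^\ell(0,\w)=1$. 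Measurability of each $\psi^\ell$ follows from that of $\lbd^\ell$ and of $\theta$, so the generalized trichotomy with the bounds~\eqref{eq:bounds:trich:nonexp:continuous} is a genuine $\psi$-trichotomy in the sense of Corollary~\ref{corollary:psi:cont}.

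Next I would compute $d_{\psi^\ell}(\w)$ from~\eqref{eq:derivative:psi}. Using~\eqref{eq:derivative:nonexp},
\[
\dfrac{\psi^\ell(h,\w)-1}{h}
= \dfrac{\lbd^\ell(\w) - \lbd^\ell(\theta^h\w)}{h\,\lbd^\ell(\theta^h\w)}
\;\longrightarrow\; -\dfrac{d_{\lbd^\ell}(\w)}{\lbd^\ell(\w)} \quad \text{as } h \to 0,
\]
so $d_{\psi^\ell}(\w) = -d_{\lbd^\ell}(\w)/\lbd^\ell(\w)$ as indicated in the statement. Substituting this identity into the hypothesis of Corollary~\ref{corollary:psi:cont} that controls $d_K(\w)/K(\w)$ and into the formulas for $a(\w)$ and $b(\w)$ appearing there, one recovers verbatim the conditions on $K$ and the bound on $\Lip(f_\w)$ required in the present corollary.

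The remaining task is to translate the limit conditions~\eqref{eq:corol:psi:limits}. A direct computation gives
\[
K(\theta^t\w)\,\psi^\uu(-t,\theta^t\w)\,\psi^\cs(t,\w)
= \dfrac{\lbd^\cs(\w)}{\lbd^\uu(\w)}\cdot K(\theta^t\w)\,\dfrac{\lbd^\uu(\theta^t\w)}{\lbd^\cs(\theta^t\w)},
\]
and analogously $K(\theta^t\w)\,\psi^\sss(-t,\theta^t\w)\,\psi^\cu(t,\w) = \dfrac{\lbd^\cu(\w)}{\lbd^\sss(\w)}\, K(\theta^t\w)\, \dfrac{\lbd^\sss(\theta^t\w)}{\lbd^\cu(\theta^t\w)}$. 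Since the prefactors are independent of $t$, the vanishing conditions stated here are equivalent to~\eqref{eq:corol:psi:limits}. With all hypotheses of Corollary~\ref{corollary:psi:cont} verified, its conclusion — which is exactly that of Theorem~\ref{thm:global} — transfers directly.

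I do not anticipate a serious obstacle: the entire argument is a bookkeeping exercise translating between the abstract $\psi$-trichotomy framework and the explicit rates $\lbd^\ell(\w)/\lbd^\ell(\theta^t\w)$. The only points requiring mild care are keeping signs straight when differentiating the reciprocal $\psi^\ell$ and matching the ``past'' versus ``future'' directions when transcribing the limits in~\eqref{eq:corol:psi:limits}.
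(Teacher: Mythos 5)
Your proposal is correct and is exactly the route the paper takes: the paper obtains this corollary by specializing Corollary~\ref{corollary:psi:cont} to $\psi^\ell(t,\w)=\lbd^\ell(\w)/\lbd^\ell(\theta^t\w)$, using precisely the identity $d_{\psi^\ell}(\w)=-d_{\lbd^\ell}(\w)/\lambda^\ell(\w)$ and the same translation of the hypotheses. The only point worth flagging is that the corollary as printed takes both limits as $t\to+\infty$, whereas your (correct) computation shows the $\lbd^\sss/\lbd^\cu$ condition corresponds to the $t\to-\infty$ limit in~\eqref{eq:corol:psi:limits} — an apparent typo in the statement that your bookkeeping implicitly corrects.
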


\begin{example}[Nonexponential trichotomy]\label{example:Nonexponential trichotomy}
Consider for the driving system the horizontal flow in $\R^2$ given by $\,\theta^t (x,y) = (x+t,y)$, which preserves the Lebesgue measure. Let 
   $C, \xi^\cs, \xi^\cu,\xi^\sss, \xi^\uu$ and $\eps$ be some real constants with $C \ge 1$ and  $\eps \ge 0$, and set:
   \begin{align*}
      & \lbd^\ell(x,y)=(1+x^2)^{-(1+y^2)\xi_\ell},\,\,\ell\in\set{\cs,\cu,\sss,\uu}\\
      & K(x,y)=C (1+x^2)^{(1+y^2)\eps}.
 \end{align*}
In this case we obtain a polynomial type trichotomy. Let us assume $\lbd_\cs\ge\lbd_\cu$. Thus we have a trichotomy with
\[
\begin{split}
      &    
         \alpha^\cc_{t,(x,y)} =
         \begin{cases}
             C\pfrac{1+(x+t)^2}{1+x^2}^{(1+y^2)\xi^\cs} (1+x^2)^{(1+y^2)\eps},& t\ge0\\[4mm] 
            C\pfrac{1+(x+t)^2}{1+x^2}^{(1+y^2)\xi^\cu} (1+x^2)^{(1+y^2)\eps}, & t\le0
         \end{cases}\\
      &   
         \alpha^\sss_{t,(x,y)} = \,\,\,\,\,\, C\pfrac{1+(x+t)^2}{1+x^2}^{(1+y^2)\xi^\sss} (1+x^2)^{(1+y^2)\eps}, \,\,\,\, t\ge0\\
      &   
         \alpha^\uu_{t,(x,y)} = \,\,\,\,\,\, C\pfrac{1+(x+t)^2}{1+x^2}^{(1+y^2)\xi^\uu} (1+x^2)^{(1+y^2)\eps}, \,\,\,\, t \le 0.   \end{split}
\]
Notice that $d_{\lambda^\ell}(x,y)=\frac\partial{\partial x}\lambda^\ell(x,y)$.

\end{example}

\section{Discrete-time examples}\label{section:examples:disc}
In this section we assume $\T=\Z$ and provide some corollaries to Theorem~\ref{thm:global:disc}. 
Let $X$ be a Banach space and let $\Sigma\equiv(\W,\mathcal{F},\mathbb{P},\theta)$ be a measure-preserving dynamical system. Throughout this subsection we consider a real number $\delta \in\ ]0,1/6[$ and a random variable $G \colon \W \to\ ]0,+\infty[$ such that for all $\w \in \W$ we have $$\sum_{k=-\infty}^{+\infty} G(\theta^k\w) \le 1.$$

\subsection{Tempered exponential trichotomies}

 Consider  $\theta$-invariant random variables
   \begin{equation*}
      \lbd^\cs, \lbd^\cu, \lbd^\sss, \lbd^\uu \colon \W \to \R.
   \end{equation*}
   We say that a measurable linear RDS $\Phi$ on $X$ over $\Sigma$ exhibits an \emph{exponential trichotomy} if it admits a generalized trichotomy with bounds
     \begin{align*}
      &    
         \alpha^\cc_{n,\w} =
         \begin{cases}
            K(\w) \e^{\lbd^\cs(\w)n}, \,\, n\ge0\\
            K(\w) \e^{\lbd^\cu(\w)n}, \,\, n\le0
         \end{cases}\\
      &   
         \alpha^\sss_{n,\w} = \,\,\,\, K(\w) \e^{\lbd^\sss(\w)n}, \,\, n\ge0\\
      &   
         \alpha^\uu_{n,\w} = \,\,\,\, K(\w) \e^{\lbd^\uu(\w)n}, \,\, n \le 0
	\end{align*}
   for some random variable $K:\W \to [1,+\infty[$. If the  random variable $K$ is \emph{tempered} we say that $\Phi$ exhibits an \emph{tempered exponential trichotomy}. Notice that  in the discrete-time case the condition~\eqref{eq:tempered} is equivalent to
   \begin{equation*}%\label{eq:tempered:lim}
      \lim_{n\to\pm\infty}\frac1{\abs{n}} \log K(\theta^n\w)=0
      \text{ for all $\w\in\W$}.
   \end{equation*}

\begin{corollary}
   Let $\Phi$ be a measurable linear RDS exhibiting a tempered exponential trichotomy such that, for all $\w\in\W$, satisfies
       \begin{equation*}%\label{eq:lbd cu> lbd ss}
      \lbd^\cu(\w)>\lbd^\sss(\w) 
      \quad\text{ and }\quad
      \lbd^\cs(\w)<\lbd^\uu(\w)
   \end{equation*}
   and let $f \in \sF$. Consider a $\theta$-invariant random variable $\gamma(\w)>0$ satisfying  for all $\w \in \W$
   \begin{equation*}
     a(\w):=  \lbd^\cu(\w)-\lbd^\sss(\w)-\gamma(\w)>0
       \,\text{ and } \, 
       b(\w):=\lbd^\uu(\w)-\lbd^\cs(\w)-\gamma(\w)>0.
   \end{equation*}
   If 
   \begin{align*}
      & \Lip(f_\w)\\
      & \le \frac{\delta}{K(\theta\w)}
         \mins{\e^{\min\{\lbd^\cu(\w),\lbd^\cs(\w)\}}G(\w),
         \e^{\lbd^\uu(\w)}\frac{\e^{a(\w)}-1}{\Lambda_{K,\gamma(\w),\w}},
         \e^{\lbd^\sss(\w)}\frac{1-\e^{-b(\w)}}{\Lambda_{K,\gamma(\w),\w}}}
   \end{align*}
for all $\w\in\W$ then the same conclusions of Theorem~\ref{thm:global} hold.
\end{corollary}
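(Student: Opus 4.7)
The plan is to verify the two hypotheses of Theorem~\ref{thm:global:disc}, namely the decay of the products $\alpha^\sss_{-n,\theta^n\w}\alpha^\cc_{n,\w}$ as $n\to-\infty$ and $\alpha^\uu_{-n,\theta^n\w}\alpha^\cc_{n,\w}$ as $n\to+\infty$, together with the estimate $\sgm+\tau<1/2$; once both are in place, Theorem~\ref{thm:global:disc} applies directly. The argument parallels the continuous-time proof of Corollary~\ref{corollary:tempered:cont}, with sums replacing integrals.

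For the decay condition I use the $\theta$-invariance of the $\lbd^\ell$: for $n\le 0$,
\[
\alpha^\sss_{-n,\theta^n\w}\alpha^\cc_{n,\w}
= K(\w)K(\theta^n\w)\e^{(\lbd^\cu(\w)-\lbd^\sss(\w))n},
\]
and temperedness gives $K(\theta^n\w)\le\Lambda_{K,\gamma(\w),\w}\e^{\gamma(\w)|n|}$, so the product is dominated by $K(\w)\Lambda_{K,\gamma(\w),\w}\e^{a(\w)n}$, which tends to $0$ as $n\to-\infty$ since $a(\w)>0$. The $\uu$ limit is symmetric, with $b(\w)>0$ playing the role of $a(\w)$.

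To control $\sgm$, I substitute the exponential bounds into the definition of $\sgm_\w^+$ and observe that the $\e^{\lbd^\cs}$ factors telescope, reducing the expression to $\e^{-\lbd^\cs(\w)}\sum_{k=0}^{n-1}K(\theta^{k+1}\w)\Lip(f_{\theta^k\w})$. Applying the first component of the $\min$ bounding $\Lip(f_{\theta^k\w})$---which contains $\e^{\min\{\lbd^\cu,\lbd^\cs\}}G(\theta^k\w)/K(\theta^{k+1}\w)$---cancels the $K(\theta^{k+1}\w)$ and absorbs the $\e^{-\lbd^\cs}$ prefactor in either branch of the minimum, yielding $\sgm_\w^+\le\delta\sum_k G(\theta^k\w)\le\delta$. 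The argument for $\sgm_\w^-$ is identical with $\lbd^\cu$ replacing $\lbd^\cs$. For $\tau_\w^-$, writing out the definition gives
\[
\tau_\w^-=\sum_{k=-\infty}^{-1}K(\w)K(\theta^{k+1}\w)\e^{-\lbd^\sss(\w)(k+1)}\e^{\lbd^\cu(\w)k}\Lip(f_{\theta^k\w}),
\]
and I bound the Lipschitz term by the second component of the $\min$ (evaluated at $\theta^k\w$, using $\theta$-invariance of $\lbd$, $\gamma$, $a$, $b$). Combining this with the tempered estimate $K(\w)\le\Lambda_{K,\gamma(\w),\theta^k\w}\e^{-\gamma(\w)k}$, valid for $k\le 0$, cancels the $\Lambda$ factors and reduces the sum to a constant multiple of $\sum_{k\le -1}\e^{a(\w)k}=(\e^{a(\w)}-1)^{-1}$, which is precisely designed to cancel the $\e^{a(\w)}-1$ in the numerator of the Lip hypothesis. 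An entirely analogous computation for $\tau_\w^+$ uses the third component of the $\min$ together with $\sum_{k\ge 0}\e^{-b(\w)k}=(1-\e^{-b(\w)})^{-1}$. Combining everything yields $\sgm+\tau\le 3\delta<1/2$.

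The main bookkeeping obstacle is the off-by-one index shift built into the discrete cocycle---the $\theta^{k+1}$ appearing in each summand and the corresponding presence of $K(\theta\w)$ rather than $K(\w)$ in the Lip hypothesis. This off-by-one is also what explains why the discrete bound carries the factor $\e^{a(\w)}-1$ (respectively $1-\e^{-b(\w)}$) where the continuous-time Corollary~\ref{corollary:tempered:cont} uses simply $a(\w)$ (respectively $b(\w)$): the geometric-series identity $\sum_{k\le -1}\e^{ak}=(\e^a-1)^{-1}$ replaces the integral $\int_{-\infty}^0\e^{ar}\,dr=a^{-1}$.
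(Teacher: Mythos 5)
Your architecture is the right one and matches what the paper does for the analogous proved results (Corollary~\ref{corollary:tempered:cont} and the discrete $\psi$-trichotomy corollary): check the two decay limits, then show $\sgm\le\delta$ and $\tau\le 2\delta$. The limits and the $\sgm$ estimate are correct as you describe them. The gap is in the $\tau$ estimate, where you assert a cancellation that does not occur with the hypothesis as literally stated. Take your own formula for $\tau_\w^-$, insert the second component of the $\min$ at $\theta^k\w$ and the tempered bound $K(\w)\le \e^{-\gamma(\w)k}\Lambda_{K,\gamma(\w),\theta^k\w}$ for $k\le 0$: since $\e^{-\lbd^\sss(\w)(k+1)}=\e^{-\lbd^\sss(\w)k}\,\e^{-\lbd^\sss(\w)}$, each summand is bounded by $\delta\,\e^{\lbd^\uu(\w)-\lbd^\sss(\w)}\bigl(\e^{a(\w)}-1\bigr)\e^{a(\w)k}$. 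The off-by-one shift produces the factor $\e^{-\lbd^\sss(\w)}$, but the hypothesis supplies the prefactor $\e^{\lbd^\uu(\w)}$, so after summing the geometric series you get $\tau_\w^-\le\delta\,\e^{\lbd^\uu(\w)-\lbd^\sss(\w)}$, and symmetrically $\tau_\w^+\le\delta\,\e^{\lbd^\sss(\w)-\lbd^\uu(\w)}$. Their sum is at least $2\delta$ only when $\lbd^\uu=\lbd^\sss$ and is unbounded otherwise (and $\lbd^\sss<0<\lbd^\uu$ is the typical situation), so $\tau\le 2\delta$ and hence $\sgm+\tau<1/2$ do not follow from the argument as written.

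The computation closes exactly when the prefactor multiplying $\bigl(\e^{a(\w)}-1\bigr)/\Lambda_{K,\gamma(\w),\w}$ is $\e^{\lbd^\sss(\w)}$ and the prefactor multiplying $\bigl(1-\e^{-b(\w)}\bigr)/\Lambda_{K,\gamma(\w),\w}$ is $\e^{\lbd^\uu(\w)}$, i.e.\ the two exponential prefactors in the statement must be interchanged. That this is the intended pairing is confirmed by the continuous-time Corollary~\ref{corollary:tempered:cont}, where the $a$-component (with $a=\lbd^\cu-\lbd^\sss-\gamma$) controls the integral involving $\alpha^\sss$ and $\alpha^\cc$ over $]-\infty,0]$, and by the proved discrete $\psi$-trichotomy corollary, whose telescoping quantities pair $\uu$ with $\cs$ and $\cu$ with $\sss$ in the same way. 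You should either prove the statement with the corrected prefactors and flag the mismatch explicitly, or explain how the residual factor $\e^{\lbd^\uu(\w)-\lbd^\sss(\w)}$ is absorbed --- it is not, in general. Apart from this one point, your bookkeeping of the discrete shift (the appearance of $K(\theta\w)$ and the replacement of $a(\w)^{-1}$ by $(\e^{a(\w)}-1)^{-1}$) is exactly right.
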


\subsection{$\psi$-trichotomies}

Consider measurable functions
      $$ \psi^{\cs},  \psi^{\cu}, \psi^{\sss},   \psi^{\uu}\colon \Z \x \W \to ]0,+\infty[$$
   such that for $\ell\in\{\cs, \cu, \sss, \uu\}$ we have
   \begin{equation*}
      \psi^\ell(t+s,\w) = \psi^\ell(t,\theta^s \w) \psi^\ell (s,\w)
   \end{equation*}
   for all $t,s \in \Z$ and all $\w \in \W$.   A \emph{$\psi$-trichotomy} is a generalized trichotomy with bounds
   \begin{equation*}
   \begin{split}
      &    
         \alpha^\cc_{n,\w} =
         \begin{cases}
            K(\w) \psi^\cs(n,\w), \,\, t\ge0\\
            K(\w) \psi^\cu(n,\w), \,\, t\le0
         \end{cases}\\
      &   
         \alpha^\sss_{t,\w} = \,\,\,\, K(\w) \psi^\sss(n,\w), \,\, t\ge0\\
      &   
         \alpha^\uu_{t,\w} = \,\,\,\, K(\w) \psi^\uu(n,\w), \,\, t \le 0   \end{split}
   \end{equation*}
   where $K\colon\W\to[1,+\infty[$ is a random variable.  We notice that, as in the continuous-time case, we may consider different growth rates along the  {central directions} $E_\w^\cc$, depending if we are looking to the \emph{future} ($n\to+\infty$) or to the \emph{past} ($n\to-\infty$).

\begin{corollary}
   Let $\Phi$ be a measurable linear RDS exhibiting  a $\psi$-trichotomy such that
   \begin{equation}\label{eq:psi<K<psi:disc}
      \dfrac{\psi^\cs(1,\w)}{\psi^\uu(1,\w)} < \dfrac{K(\theta \w)}{K(\w)}
      < \dfrac{\psi^\cu(1,\w)}{\psi^\sss(1,\w)}.
   \end{equation}
   Let $f \in \sF$ be such that
   \begin{equation*}
      \Lip(f_\w)
      \le \dfrac{\delta}{K(\theta\w)} \mins{\psi^\cs(1,\w)G(\w),\psi^\cu(1,\w) G(\w),a(\w),b(\w)},
   \end{equation*}
   where
   \begin{equation*}
      a(\w)= \dfrac{\psi^\uu(1,\w)}{K(\w)} - \dfrac{\psi^\cs(1,\w)}{K(\theta \w)}
      \,\,
      \text{ and } 
      \,\,
      b(\w)=\dfrac{\psi^\cu(1,\w)}{K(\theta\w)} - \dfrac{\psi^\sss(1,\w)}{K(\w)}.
   \end{equation*}
   If
   \begin{equation*}
      \dlim_{n \to -\infty} K(\theta^n \w) \psi^\sss(-n,\theta^n\w)\psi^\cu(n,\w) =\dlim_{n \to +\infty} K(\theta^n \w) \psi^\uu(-n,\theta^n\w)\psi^\cs(n,\w) = 0
   \end{equation*}
   for all $\w \in \W$, then the same conclusion of Theorem~\ref{thm:global:disc} holds.
\end{corollary}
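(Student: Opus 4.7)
The plan is to reduce the corollary to Theorem~\ref{thm:global:disc} by verifying its two hypotheses: the vanishing of the cross-products $\alpha^\sss_{-n,\theta^n\w}\alpha^\cc_{n,\w}$ and $\alpha^\uu_{-n,\theta^n\w}\alpha^\cc_{n,\w}$, and the smallness condition $\sgm+\tau<1/2$. The first is immediate, since the displayed limit hypothesis in the corollary differs from the one in the theorem only by the factor $K(\w)$, which is $\w$-constant under the relevant $n$-limit. The substantive work is to establish $\sgm\le\delta$ and $\tau\le 2\delta$, after which $\sgm+\tau\le 3\delta<1/2$ by the assumption $\delta\in{]0,1/6[}$.

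The bound on $\sgm$ will follow the same pattern as in the proof of Corollary~\ref{corollary:tempered:cont}, adapted to the discrete cocycle identity. Using $\psi^\cs(n,\w)=\psi^\cs(n-k-1,\theta^{k+1}\w)\,\psi^\cs(1,\theta^k\w)\,\psi^\cs(k,\w)$, the ratio $\alpha^\cc_{n-k-1,\theta^{k+1}\w}\,\alpha^\cc_{k,\w}/\alpha^\cc_{n,\w}$ telescopes down to $K(\theta^{k+1}\w)/\psi^\cs(1,\theta^k\w)$, and multiplying by the hypothesis $\Lip(f_{\theta^k\w})\le \delta\,\psi^\cs(1,\theta^k\w)G(\theta^k\w)/K(\theta^{k+1}\w)$ cancels the inconvenient factors, leaving $\delta\sum_{k=0}^{n-1}G(\theta^k\w)\le\delta$. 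The estimate for $\sgm_\w^-$ is entirely symmetric, using the $\psi^\cu$-cocycle and the bound involving $\psi^\cu(1,\w)G(\w)$.

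The bound on $\tau$ is the main obstacle, since the continuous-time proof used integration by parts (Corollary~\ref{corollary:psi:cont}), and we must find the correct discrete analogue. The key is to introduce the auxiliary sequences
\[
u_k(\w)=\frac{\psi^\uu(-k,\theta^k\w)\,\psi^\cs(k,\w)}{K(\theta^k\w)},\qquad
v_k(\w)=\frac{\psi^\sss(-k,\theta^k\w)\,\psi^\cu(k,\w)}{K(\theta^k\w)},
\]
which play the role of the primitives used in the continuous case. A direct calculation with the cocycle relations $\psi^\uu(-k-1,\theta^{k+1}\w)=\psi^\uu(-k,\theta^k\w)/\psi^\uu(1,\theta^k\w)$ and $\psi^\cs(k+1,\w)=\psi^\cs(1,\theta^k\w)\psi^\cs(k,\w)$ yields the identity
\[
u_k(\w)-u_{k+1}(\w)=\psi^\uu(-k-1,\theta^{k+1}\w)\,\psi^\cs(k,\w)\,a(\theta^k\w),
\]
and the analogous one $v_{k+1}(\w)-v_k(\w)=\psi^\sss(-k-1,\theta^{k+1}\w)\,\psi^\cu(k,\w)\,b(\theta^k\w)$. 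Feeding the Lipschitz bounds $\Lip(f_{\theta^k\w})\le\delta\,a(\theta^k\w)/K(\theta^{k+1}\w)$ and $\Lip(f_{\theta^k\w})\le\delta\,b(\theta^k\w)/K(\theta^{k+1}\w)$ into the definitions of $\tau_\w^+$ and $\tau_\w^-$ then collapses the sums telescopically, giving $\tau_\w^+\le\delta K(\w)\,(u_0(\w)-\lim_{k\to+\infty}u_k(\w))$ and similarly for $\tau_\w^-$. Since $u_0(\w)=v_0(\w)=1/K(\w)$, and since the limit hypothesis together with $K\ge 1$ forces $u_k,v_k\to 0$, we conclude $\tau_\w^++\tau_\w^-\le 2\delta$.

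Having established $\sgm+\tau\le 3\delta<1/2$, Theorem~\ref{thm:global:disc} applies directly and delivers the required $N\in{]0,1[}$, the unique $\phi\in\cL_N$ with invariant graph, and the Lipschitz estimate for $\Psi$ along the center manifold. The hardest and most delicate step is pinning down the exact form of the telescoping identity for $u_{k+1}-u_k$ and $v_{k+1}-v_k$; the rest is bookkeeping on the cocycle property of $\psi^\ell$ and careful indexing of the summation ranges defining $\sgm_\w^\pm$ and $\tau_\w^\pm$.
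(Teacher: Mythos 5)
Your proposal is correct and follows essentially the same route as the paper: telescoping the $\tau_\w^\pm$ sums via the sequences $u_k(\w)=\psi^\uu(-k,\theta^k\w)\psi^\cs(k,\w)/K(\theta^k\w)$ and $v_k(\w)=\psi^\sss(-k,\theta^k\w)\psi^\cu(k,\w)/K(\theta^k\w)$ is exactly the difference the paper writes inside its sum, and your identity $u_k-u_{k+1}=\psi^\uu(-k-1,\theta^{k+1}\w)\psi^\cs(k,\w)\,a(\theta^k\w)$ checks out against the cocycle relations. The bounds $\sgm\le\delta$, $\tau\le 2\delta$ and the reduction of the limit hypothesis (up to the fixed factor $K(\w)$) all match the paper's argument.
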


\begin{proof}
   We will check that we are in conditions to apply Theorem~\ref{thm:global:disc}. Notice that from~\eqref{eq:psi<K<psi:disc} we conclude $a(\w), b(\w)>0$. We have
	\begin{align*}
         \sgm_\w^-
      & 
         = \sup\limits_{n\in\N}\dfrac{1}{\psi^\cu(-n,\w)}\dsum_{k=-n}^{-1}
         K(\theta^{k+1} \w) \psi^\cu(-n-k-1,\theta^{k+1}\w) \Lip(f_{\theta^k \w})\psi^\cs(k,\w)   
         \\
      & 
         \le \delta \dsum_{k=-\infty}^{+\infty} G(\theta^k\w) \le \delta,
	\end{align*}
   and, similarly, $\sgm_\w^+ \leq\delta$. Thus $\sgm\leq\delta$. 
   Moreover,
   \begin{align*}
         \tau_\w^+
      &
         =\dsum_{k=0}^{+\infty} K(\theta^{k+1}\w)\psi^\uu(-k-1,\theta^{k+1}\w)\Lip(f_{\theta^k\w}) K(\w)\psi^\cs(k,\w)\\
      &
         \leq\delta K(\w)\dsum_{k=0}^{+\infty} \left[\dfrac{\psi^\uu(-k,\theta^k\w)\psi^\cs(k,\w))}{K(\theta^k\w)} -\dfrac{\psi^\uu(-(k+1),\theta^{k+1}\w)\psi^\cs(k+1,\w))}{K(\theta^{k+1}\w)} \right]\\     
      &
         \leq \delta K(\w)\left(\dfrac{1}{K(\w)}-\lim_{k\to+\infty}\dfrac{\psi^\uu(-k,\theta^k\w)\psi^\cs(k,\w))}{K(\theta^k\w)}\right)\\
      &
         = \delta.
   \end{align*}
   Similarly we get $\tau_\w^-\leq \delta$.
   Therefore $\sigma+\tau \le 3\delta < 1/2$. 
\end{proof}

In the following we consider particular $\psi$-trichotomies.

\subsubsection{Summable exponential trichotomies}

We start by considering the integral (or summable) exponential trichotomies, which are  a generalization of  the exponential trichotomies and can be seen as the discrete counterpart of those in Section~\ref{sec:ex:integral}.
 Let
   \begin{equation*}
      \lbd^\cs, \lbd^\cu, \lbd^\sss, \lbd^\uu \colon \W \to \R
   \end{equation*}
   be random variables and set For all $\ell \in\{{\cs}, \cu, {\sss}, {\uu}\}$ we set
   \begin{equation*}
      S^\ell(n,\w)
      = 
      \begin{cases}
         \lbd^\ell(\w)+\cdots+\lbd^\ell(\theta^{n-1} \w), 
            & \ n\geq1\\
         0, 
            & \ n=0\\
         -\lbd^\ell(\theta^{n}\w)-\cdots -\lbd^\ell(\theta^{-1}\w), 
            & \ n\leq-1
      \end{cases}
   \end{equation*}
   An \emph{summable exponential trichotomy} is a $\psi$-trichotomy with
   \[
   \psi^\ell(t,\w)=\e^{S^\ell(n,\w)}
   \]
for all $\ell\in\set{\cs, \cu, \sss, \uu}$, i.e., is a generalized trichotomy with bounds
   \begin{align*}
      &    
         \alpha^\cc_{n,\w} =
         \begin{cases}
            K(\w) \e^{S^\cs(n,\w)}, \,\, n\ge0\\
            K(\w) \e^{S^\cu(n,\w)}, \,\, n\le0
         \end{cases}\\
      &   
         \alpha^\sss_{n,\w} = \,\,\,\, K(\w) \e^{S^\sss(n,\w)}, \,\, n\ge0\\
      &   
         \alpha^\uu_{n,\w} = \,\,\,\, K(\w) \e^{S^\uu(n,\w)}, \,\, n \le 0
	\end{align*}
      for some tempered random variable $K:\W \to [1,+\infty[$.

\begin{corollary}
   Let $\Phi$ be a measurable linear RDS exhibiting  a summable exponential trichotomy such that
   \begin{equation*}
      \dfrac{\e^{\lbd^\cs(\w)}}{\e^{\lbd^\uu(\w)}} 
      < \dfrac{K(\theta\w)}{K(\w)}
      < \dfrac{\e^{\lbd^\cu(\w)}}{\e^{\lbd^\sss(\w)}}.
   \end{equation*}
   Let $f \in \sF$ be such that
   \begin{equation*}
      \Lip(f_\w)
      \le \dfrac{\delta}{K(\theta\w)} \mins{\e^{\lbd^\cs(\w)}G(\w),\e^{\lbd^\cu(\w)} G(\w),a(\w),b(\w)},
   \end{equation*}
   where
     \[ a(\w)=      \dfrac{\e^{\lbd^\uu(\w)}}{K(\w)} - \dfrac{\e^{\lbd^\cs(\w)}}{K(\theta \w)}
    \,\,
    \text{ and }
\,\,
      b(\w)=\dfrac{\e^{\lbd^\cu(\w)}}{K(\theta\w)} - \dfrac{\e^{\lbd^\sss(\w)}}{K(\w)}.
   \end{equation*}
      If
   \begin{equation*}
      \dlim_{n \to -\infty} K(\theta^n \w) \e^{S^\sss(-n,\theta^n\w)+S^\cu(n,\w)} =\dlim_{n \to +\infty} K(\theta^n \w) \e^{S^\uu(-n,\theta^n\w)+S^\cs(n,\w)} = 0
   \end{equation*}
   for all $\w \in \W$, then the same conclusion of Theorem~\ref{thm:global:disc} holds.
\end{corollary}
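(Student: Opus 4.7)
The plan is to deduce this corollary as an immediate specialization of the preceding $\psi$-trichotomy corollary, by setting $\psi^\ell(n, \w) = \e^{S^\ell(n, \w)}$ for each $\ell \in \set{\cs, \cu, \sss, \uu}$. Under this choice the summable exponential trichotomy is precisely a $\psi$-trichotomy with the same random variable $K$, so the task reduces to checking that every hypothesis of the earlier corollary translates verbatim into the ones assumed here.

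First I would verify the cocycle identity $\psi^\ell(n+m, \w) = \psi^\ell(n, \theta^m \w) \, \psi^\ell(m, \w)$, which amounts to showing $S^\ell(n+m, \w) = S^\ell(n, \theta^m \w) + S^\ell(m, \w)$. This is a telescoping identity from the definition of $S^\ell$, but it must be checked over all sign combinations of $n$, $m$, and $n+m$, because $S^\ell$ switches between a forward and a backward sum across zero. In particular, this gives $\psi^\ell(1, \w) = \e^{\lbd^\ell(\w)}$, so the bounds $\alpha^\cc$, $\alpha^\sss$, $\alpha^\uu$ of the two settings coincide by construction.

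Substituting $\psi^\ell(1, \w) = \e^{\lbd^\ell(\w)}$ into the hypotheses of the $\psi$-trichotomy corollary, the inequality relating $K(\theta \w)/K(\w)$ to the quotients of $\psi^\ell(1,\w)$ becomes exactly the assumption stated here on $K$; the Lipschitz bound on $f_\w$ transcribes verbatim; and the expressions for $a(\w)$ and $b(\w)$ agree. Finally, from $\psi^\ell(n,\w) = \e^{S^\ell(n,\w)}$ one has
\[
    \psi^\sss(-n, \theta^n \w)\,\psi^\cu(n, \w) = \e^{\,S^\sss(-n, \theta^n \w) + S^\cu(n, \w)},
\]
and analogously for the pair $(\psi^\uu, \psi^\cs)$, so the two limit conditions assumed here are precisely those required by the earlier corollary. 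Invoking that corollary therefore yields the conclusion of Theorem~\ref{thm:global:disc}. The only step requiring any real care is the case analysis in the cocycle identity for $S^\ell$, which is elementary index bookkeeping; I do not expect a substantive obstacle.
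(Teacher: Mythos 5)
Your proposal is correct and matches the paper's intent exactly: the paper presents the summable exponential trichotomy as the special case $\psi^\ell(n,\w)=\e^{S^\ell(n,\w)}$ of the discrete-time $\psi$-trichotomy corollary and leaves the proof as that verbatim specialization, which is precisely what you carry out (including the cocycle identity for the Birkhoff sums $S^\ell$ and the observation $\psi^\ell(1,\w)=\e^{\lbd^\ell(\w)}$).
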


\subsubsection{Non exponential trichotomies} 
We provide a particular type of $\psi$-trichotomies that can be easily handled to construct trichotomies beyhond the exponential bounds in the discrete-time scenario. Consider a $\psi$-tri\-chot\-o\-my with
   \[
   \psi^\ell(n,\w)=\dfrac{\lbd^\ell(\w)}{\lbd^\ell(\theta
^n\w)}
   \]
for all $\ell\in\set{\cs, \cu, \sss, \uu}$, \emph{i.e.}, is a generalized trichotomy with bounds
   \begin{equation}\label{eq:nonexp:bounds:disc:1}
   \begin{split}
      &    
         \alpha^\cc_{n,\w} =
         \begin{cases}
             K(\w) \dfrac{\lbd^\cs(\w)}{\lbd^\cs(\theta^n \w)}, \,\, n\ge0\\[4mm] 
            K(\w) \dfrac{\lbd^\cu(\w)}{\lbd^\cu(\theta^n \w)}, \,\, n\le0
         \end{cases}\\
      &   
         \alpha^\sss_{n,\w} = \,\,\,\, K(\w) \frac{\lbd^\sss(\w)}{\lbd^\sss(\theta^n \w)}, \,\, n\ge0\\
      &   
         \alpha^\uu_{n,\w} = \,\,\,\, K(\w) \frac{\lbd^\uu(\w)}{\lbd^\uu(\theta^n \w)}, \,\, n \le 0.   \end{split}
   \end{equation}

For future use let us define
 \begin{equation*}
      a(\w)=      \dfrac{\lbd^\uu(\w)}{\lbd^\uu(\theta
\w)K(\w)} - \dfrac{\lbd^\cs(\w)}{\lbd^\cs(\theta
\w)K(\theta\w)}
   \end{equation*}
   and
    \begin{equation*}
      b(\w)=\dfrac{\lbd^\cu(\w)}{\lbd^\cu(\theta
\w)K(\theta\w)} - \dfrac{\lbd^\sss(\w)}{\lbd^\sss(\theta
\w)K(\w)}.
   \end{equation*}

\begin{corollary}
   Let $\Phi$ be a measurable linear RDS exhibiting  an $\alpha$-tri\-chot\-o\-my with bounds~\eqref{eq:nonexp:bounds:disc:1} and such that
   \begin{equation*}
    \dfrac{\lbd^\cs(\w)\lbd^\uu(\theta
\w)}{\lbd^\cs(\theta
\w)\lbd^\uu(\w)} < \dfrac{K(\theta\w)}{K(\w)}<\dfrac{\lbd^\cu(\w)\lbd^\sss(\theta
\w)}{\lbd^\cu(\theta
\w)\lbd^\sss(\w)}.
   \end{equation*}
    Let $f \in \sF$ be such that
      \begin{equation*}
      \Lip(f_\w)
      \le \dfrac{\delta}{K(\theta\w)} \mins{\dfrac{\lbd^\cs(\w)}{\lbd^\cs(\theta
\w)}G(\w),\dfrac{\lbd^\cu(\w)}{\lbd^\cu(\theta
\w)}G(\w),a(\w),b(\w)}.
   \end{equation*}
     If
   \begin{equation*}
      \dlim_{n \to -\infty} \dfrac{K(\theta^n \w)\lbd^\sss(\theta^n \w)}{\lbd^\cu(\theta^n \w)}=\dlim_{n \to +\infty} \dfrac{K(\theta^n \w)\lbd^\uu(\theta^n \w)}{\lbd^\cs(\theta^n \w)} = 0
   \end{equation*}
   for all $\w \in \W$, then the same conclusion of Theorem~\ref{thm:global:disc} holds.
\end{corollary}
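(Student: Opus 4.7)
My plan is to reduce directly to the preceding $\psi$-trichotomy corollary by observing that the assumed $\alpha$-trichotomy with bounds~\eqref{eq:nonexp:bounds:disc:1} is exactly a $\psi$-trichotomy for the choice
\[
\psi^\ell(n,\w) = \dfrac{\lbd^\ell(\w)}{\lbd^\ell(\theta^n\w)}, \qquad \ell \in \set{\cs,\cu,\sss,\uu}.
\]
First I would verify the cocycle identity $\psi^\ell(n+m,\w) = \psi^\ell(n,\theta^m\w)\,\psi^\ell(m,\w)$, which is an immediate telescoping of ratios, and then record the one-step values $\psi^\ell(1,\w) = \lbd^\ell(\w)/\lbd^\ell(\theta\w)$ that feed into the previous corollary.

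Next I would substitute these one-step values into the hypotheses of the previous corollary. The ratios
\[
\dfrac{\psi^\cs(1,\w)}{\psi^\uu(1,\w)} = \dfrac{\lbd^\cs(\w)\lbd^\uu(\theta\w)}{\lbd^\cs(\theta\w)\lbd^\uu(\w)}, \qquad \dfrac{\psi^\cu(1,\w)}{\psi^\sss(1,\w)} = \dfrac{\lbd^\cu(\w)\lbd^\sss(\theta\w)}{\lbd^\cu(\theta\w)\lbd^\sss(\w)}
\]
are precisely the outer expressions in the assumed pinching of $K(\theta\w)/K(\w)$, so hypothesis~\eqref{eq:psi<K<psi:disc} is satisfied verbatim. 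The same substitution turns the quantities $\psi^\uu(1,\w)/K(\w)-\psi^\cs(1,\w)/K(\theta\w)$ and $\psi^\cu(1,\w)/K(\theta\w)-\psi^\sss(1,\w)/K(\w)$ into the $a(\w)$ and $b(\w)$ defined in the present statement, and each of the four minimands in the Lipschitz bound matches term for term.

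Finally, for the asymptotic conditions, using $\psi^\sss(-n,\theta^n\w) = \lbd^\sss(\theta^n\w)/\lbd^\sss(\w)$ and $\psi^\cu(n,\w)=\lbd^\cu(\w)/\lbd^\cu(\theta^n\w)$, I get
\[
K(\theta^n\w)\,\psi^\sss(-n,\theta^n\w)\,\psi^\cu(n,\w) = \dfrac{\lbd^\cu(\w)}{\lbd^\sss(\w)} \cdot \dfrac{K(\theta^n\w)\,\lbd^\sss(\theta^n\w)}{\lbd^\cu(\theta^n\w)},
\]
and the analogous identity for the $\psi^\uu$-$\psi^\cs$ product. Since the leading factor depends only on $\w$, the assumed limits in the present statement are equivalent to the vanishing required in the previous corollary. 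All hypotheses of that corollary being met, its conclusion, and therefore the conclusion of Theorem~\ref{thm:global:disc}, follows.

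Since the argument is a pure translation between two parametrizations of the same class of trichotomies, there is no substantive obstacle; the only care required is in verifying that reciprocals of $\lbd^\ell$-ratios do not reverse the direction of the pinching inequality and that the shift of argument in $\psi^\ell(-n,\theta^n\w)$ is handled correctly.
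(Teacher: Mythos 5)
Your proposal is correct and matches the paper's intent exactly: the paper states this corollary without proof, presenting the bounds~\eqref{eq:nonexp:bounds:disc:1} explicitly as the $\psi$-trichotomy with $\psi^\ell(n,\w)=\lbd^\ell(\w)/\lbd^\ell(\theta^n\w)$, so the intended argument is precisely your reduction to the preceding $\psi$-trichotomy corollary. All of your substitutions (the pinching ratios, $a(\w)$, $b(\w)$, the Lipschitz minimands, and the rewriting of $K(\theta^n\w)\psi^\sss(-n,\theta^n\w)\psi^\cu(n,\w)$ up to the $\w$-dependent constant $\lbd^\cu(\w)/\lbd^\sss(\w)$) check out.
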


We may consider Example~\ref{example:Nonexponential trichotomy} with $\T=\Z$ to get an apllication of this result in a nonexponential trichotomy situation.

%-----------------------------------------------------------------------------%
\section*{Acknowledgements}
%-----------------------------------------------------------------------------%
This work was partially supported by Funda\c c\~ao para a Ci\^encia e Tecnologia through Centro de Matem\'atica e Aplica\c c\~oes da Universidade da Beira Interior (CMA-UBI), project UIBD/00212/2020.
%-----------------------------------------------------------------------------%
\renewcommand{\baselinestretch}{1}

%-----------------------------------------------------------------------------%
\end{document}